\newcommand{\ip}[2]{\langle #1 , #2 \rangle}    
\newcommand{\dmin}{\displaystyle\min}
\newcommand{\dmax}{\displaystyle\max}
\newcommand{\dsum}{\displaystyle\sum}
\newcommand{\R}{{\mathbb R}}
\renewcommand{\S}{{\mathfrak S}}
\newcommand{\sJ}{\mathcal S}
\newcommand{\D}{{\mathcal D}}
\renewcommand{\L}{{\mathcal L}}
\newcommand{\linspan}{{\mathrm{span}}}
\newcommand{\conv}{{\mathrm{conv}}}
\newcommand{\dom}{{\mathrm{dom}}}
\newcommand{\dist}{{\mathrm{dist}}}
\newcommand{\graph}{{\mathrm{graph}}}
\newcommand{\ext}{{\mathrm{ext}}}
\newcommand{\vertiii}[1]{\| #1 \|}
\newcommand{\vertii}[1]{{\vert\kern-0.25ex\vert\kern-0.25ex\vert #1     \vert\kern-0.25ex\vert\kern-0.25ex\vert}}
\newcommand{\1}{{\mathbf 1}}
\newcommand{\ri}{{\mathrm{ri}}}
\DeclareMathOperator{\Image}{Im}
\newtheorem{lemma}{Lemma}
\newtheorem{theorem}{Theorem}
\newtheorem{proposition}{Proposition}
\newtheorem{corollary}{Corollary}
\def\transp{^{\text{\sf T}}}
\newcommand{\T}{\mathcal T}
\newcommand{\J}{\mathcal J}
\newcommand{\F}{\mathcal F}
\renewcommand{\H}{\mathcal H}
\newcommand{\I}{\mathcal I}
\newcommand{\matr}[1]{\begin{bmatrix} #1 \end{bmatrix}}    
\title{An algorithm to compute the Hoffman constant of a system of linear constraints}
\author{Javier Pe\~na\thanks{Tepper School of Business,
Carnegie Mellon University, USA, {\tt jfp@andrew.cmu.edu}} \and Juan Vera\thanks{Department of Econometrics and Operations Research,
Tilburg University, The Netherlands, {\tt j.c.veralizcano@uvt.nl}}
\and Luis F. Zuluaga\thanks{Department of Industrial and Systems Engineering, Lehigh University, USA, {\tt luis.zuluaga@lehigh.edu}}}
\begin{document}

\maketitle

\begin{abstract}
We propose a combinatorial algorithm to compute the Hoffman constant of a system of linear equations and inequalities.  The algorithm is based on a characterization of the Hoffman constant as the largest of a finite canonical collection of easy-to-compute Hoffman constants.  Our algorithm and characterization extend to  the more general context where some of the constraints are easy to satisfy as in the case of box constraints.  
We highlight some natural connections between our characterizations of the Hoffman constant and Renegar's distance to ill-posedness for systems of linear constraints.

\end{abstract}


\section{Introduction}
\label{sec.intro}

A classical result of~\citet{Hoff52} shows that the distance between a point $u \in \R^n$ and a non-empty polyhedron $P_{A,b}:=\{x \in \R^n: Ax \le b\}$ can be bounded above in terms of the size of the {\em residual} vector $(Au-b)_+ := \max(0,Au-b)$.  More precisely, for $A \in \R^{m\times n}$ there exists a {\em Hoffman constant} $H(A)$ that depends only on $A$ such that for all $b\in\R^m$ with $P_{A,b} \ne \emptyset$ and all $u\in \R^n$,
\begin{equation}
\label{eq:simple_erro}
\dist(u,P_{A,b}) \le H(A) \cdot \|(Au-b)_+\|.
\end{equation}
Here $\dist(u,P_{A,b}) := \min\{\|u-x\|: x\in P_{A,b}\}.$
The bound~\eqref{eq:simple_erro}  is a type of {\em error bound} for the system of inequalities $Ax \le b$, that is, an inequality bounding the distance from a point  $u\in \R^n$ to a nonempty {\em solution set} in terms of a measure of the {\em error} or {\em residual} of the point $u$.  The Hoffman bound~\eqref{eq:simple_erro} and more general error bounds play a fundamental role in mathematical programming~\cite{Nguy17,Pang97,ZhouS17}.  In particular, Hoffman bounds as well as other related error bounds are instrumental in establishing convergence properties of a variety of  algorithms~\cite{BeckS15,Garb18,GutmP18,LacoJ15,LeveL10,LuoT93,NecoNG18,PenaR16,WangL14}.  Hoffman bounds are also used to measure the 
optimality  and  feasibility  of  a  point  generated  by  rounding
an  optimal  point  of  the continuous  relaxation  of  a  mixed-integer  linear or quadratic
optimization problem~\citep{stein2016,granot1990}. Furthermore, Hoffman bounds are used in 
sensitivity analysis~\citep{jourani2000}, and to design solution methods for non-convex quadratic programs~\citep{xia2015}.

The computational task of calculating or even estimating the constant $H(A)$ is known to be notoriously challenging~\cite{KlatT95}.  The following characterization of $H(A)$ from~\cite{guler1995,KlatT95,WangL14} is often used in the optimization literature
\begin{equation}\label{eq.popular}
H(A) = \max_{J\subseteq \{1,\dots,m\}\atop A_J \text{ full row rank} }  \frac{1}{\dmin_{v\in \R^J_+, \|v\|^*=1}\|A_J\transp v\|^*}.
\end{equation}
In~\eqref{eq.popular} and throughout the paper $A_J \in \R^{J\times n}$ denotes the submatrix of $A\in \R^{m\times n}$ obtained by selecting the rows in $J \subseteq \{1,\dots,m\}.$
A naive attempt to use~\eqref{eq.popular} to compute or estimate $H(A)$ is evidently non-viable because, in principle, it requires scanning an enormous number of sets $J\subseteq \{1,\dots,m\}$.  A major limitation of~\eqref{eq.popular} is that it does not reflect the fact that the tractability of computing $H(A)$ may depend on certain structural features of $A$.  For instance, the computation of the Hoffman constant $H(A)$ is manageable when the set-valued mapping $x \mapsto Ax + \R^m_+$ is surjective, that is, when 
 $A\R^n + \R^m_+ = \R^m$. In this case, as it is shown in~\cite{KlatT95,RamdP16,robinson1973}, the sharpest constant $H(A)$ satisfying~\eqref{eq:simple_erro} is 
\[
H(A) = \dmax_{y\in \R^m\atop \|y\| = 1} \min_{x\in \R^n \atop Ax\le y}\|x\| = \frac{1}{\dmin_{v\ge 0,\, \|v\|^* = 1} \|A\transp v\|^*}.
\]
This value is computable via convex optimization for suitable norms in $\R^n$ and $\R^m$.  Furthermore, when the set-valued mapping $x \mapsto Ax + \R^m_+$ is surjective, the system of linear inequalities $Ax < 0$ is {\em well-posed,} that is, it is feasible and remains feasible for small perturbations on $A$.  In this case, the value $1/H(A)$ is precisely Renegar's {\em distance to ill-posedness}~\cite{Rene95a,Rene95b} of $Ax < 0$, that is, the size of the smallest perturbation on $A$ that destroys the well-posedness of $Ax < 0$.  

We propose a combinatorial algorithm that computes the sharpest Hoffman constant $H(A)$ for any matrix $A$ by leveraging the above well-posedness property.  The algorithm is founded on the following characterization
\[
H(A) = \max_{J \in \sJ(A) }  \frac{1}{\dmin_{v\in \R^J_+, \|v\|^*=1}\|A_J\transp v\|^*},
\]
where $\sJ(A)$ is the collection of subsets $J\subseteq \{1,\dots,m\}$ such that each $x\mapsto A_Jx + \R^J_+$ is surjective.   As we detail in Section~\ref{sec.algo}, this characterization readily enables the computation of $H(A)$ by computing $\min_{v\in \R^J_+, \|v\|^*=1}\|A_J\transp v\|^*$ over a much smaller collection $\F\subseteq \sJ(A)$.   
The identification of such a collection $\F \subseteq \sJ(A)$ is the main combinatorial challenge that our algorithm tackles.

Our characterization and algorithm to compute the Hoffman constant also extend to the more general context involving both linear equations and linear inequalities and, perhaps most interestingly, to the case where some  equations or inequalities are easy to satisfy.  The latter situation arises naturally when some of the constraints are of the form $x \le u$ or $-x \le -\ell$.  Our interest in characterizing the Hoffman constant in the more general case that includes easy-to-satisfy constraints is motivated by the recent articles~\cite{BeckS15,LacoJ15,Garb18,GutmP18,PenaR16,xia2015}.  In each  of these articles, suitable Hoffman constants for systems of linear constraints that include  easy-to-satisfy constraints play a central role in establishing key properties of modern optimization algorithms.  In particular, we show that the {\em facial distance} or {\em pyramidal width} introduced in~\cite{LacoJ15,PenaR16} is precisely a Hoffman constant of this kind.  

The paper makes the following main contributions.  First, we develop a novel algorithmic approach to compute or estimate Hoffman constants   (see Algorithm~\ref{alg:bb}, Algorithm~\ref{alg:bb.v2}, and Algorithm~\ref{alg:bb.gral}). 
Second, our algorithmic developments are supported by a fresh perspective on Hoffman error bounds based on a generic Hoffman constant for  poyhedral sublinear mappings (see Theorem~\ref{thm.main}).  This perspective readily yields a characterization of the classical Hoffman constant $H(A)$ for systems of linear inequalities (see Proposition~\ref{prop.Hoffman.gral}) and a similar characterization of the Hoffman constant for systems including both linear equations and linear inequalities (see Proposition~\ref{prop.Hoffman}).  Third, we develop  characterizations of Hoffman constants in the more general context when some of the constraints are easy to satisfy (see Proposition~\ref{prop.Hoffman.gral.rest}, Proposition~\ref{prop.Hoffman.std}, Proposition~\ref{prop.equal.easy}, and Proposition~\ref{prop.facial.dist}). 
Throughout the paper we highlight the interesting and natural but somewhat overlooked connection between the Hoffman constant and Renegar's distance to ill-posedness~\cite{Rene95a,Rene95b}, which is a cornerstone of condition measures in continuous optimization.  The paper is entirely self-contained and relies only on standard convex optimization techniques.  We make extensive use of the one-to-one correspondence between the class of sublinear set-valued mappings $\Phi:\R^n \rightrightarrows \R^m$ and the class of convex cones $K\subseteq \R^n \times \R^m$ defined via $\Phi \mapsto \graph(\Phi):=\{(x,y)\in \R^n \times \R^m: y\in \Phi(x)\}$.

Our results are related to a number of previous developments in the rich literature on error bounds~\cite{AzeC02,burke1996,guler1995,Li93,MangS87,robinson1973,VanNT09,Zali03} and on condition measures for continuous optimization~\cite{BurgC13,EpelF02,FreuV99a,FreuV03,Freu04,Lewi99,Pena00,Pena03,Rene95a,Rene95b}.  In particular, the expressions for the Hoffman constants in Proposition~\ref{prop.Hoffman.gral} and Proposition~\ref{prop.Hoffman} have appeared, albeit in slightly different form or under more restrictive conditions, in the work of  Klatte and Thiere~\cite{KlatT95}, Li~\cite{Li93},   Robinson~\cite{robinson1973}, and Wang and Lin~\cite{WangL14}.  More precisely, Klatte and Thiere~\cite{KlatT95} state and prove a version of Proposition~\ref{prop.Hoffman} under  the more restrictive assumption that $\R^n$ is endowed with the $\ell_2$ norm.  Klatte and Thiere~\cite{KlatT95} also propose an algorithm to compute the Hoffman constant which is fairly different from ours.  Li~\cite{Li93},   Robinson~\cite{robinson1973}, and Wang and Lin~\cite{WangL14} give characterizations of Hoffman constants that are equivalent to Proposition~\ref{prop.Hoffman.gral} and Proposition~\ref{prop.Hoffman} but where the maximum is taken over a different, and typically much larger, collection of index sets.
As we detail in Section~\ref{sec.Hoffman}, the expression for $H(A)$ in Proposition~\ref{prop.Hoffman.gral} can readily be seen to be at least as sharp as some bounds on~$H(A)$ derived by G\"uler et al.~\cite{guler1995} and Burke and Tseng~\cite{burke1996}.  We also note that weaker versions of Theorem~\ref{thm.main} can be obtained from  results on error bounds in Asplund spaces as those developed in the article by Van Ngai and Th{\'e}ra~\cite{VanNT09}.  Our goal to devise algorithms to compute Hoffman constants is in the spirit of and draws on the work by Freund and Vera~\cite{FreuV99a,FreuV03} to compute the distance to ill-posedness of a system of linear constraints.  Our approach to Hoffman bounds based on the correspondence  between sublinear set-valued mappings and convex cones is motivated by the work of Lewis~\cite{Lewi99}.  The characterizations of Hoffman constants when some constraints are easy to satisfy use ideas and techniques introduced by the first author in~\cite{Pena00,Pena03} and further developed by Lewis~\cite{Lewi05}.

The contents of the paper are organized as follows.  Section~\ref{sec.Hoffman} presents a characterization of the Hoffman constant $H(A)$ for $A\in \R^{m\times n}$ as the largest of a finite canonical collection of easy-to-compute Hoffman constants 
of submatrices of $A$.  We also give characterizations of similar Hoffman constants for more general cases that include both equality and inequality constraints, and where some of these constraints are easy to satisfy. 
Section~\ref{sec.algo} leverages the results of Section~\ref{sec.Hoffman} to devise an algorithm that computes the Hoffman constant $H(A)$ for $A\in \R^{m\times n}$ as well as other analogous Hoffman constants.  Section~\ref{sec.proof} contains our main theoretical result, namely a characterization of the Hoffman constant $\H(\Phi \vert \L)$ for a polyhedral sublinear mapping $\Phi:\R^n \rightrightarrows \R^m$ when the residual is known to intersect a particular linear subspace $\L\subseteq\R^m$.  The constant $\H(\Phi \vert \L)$ is the maximum of the norms of a canonical set of polyhedral sublinear mappings associated to $\Phi$ and $\L$.  
Section~\ref{sec.proof.Hoffman} presents the proofs of the main statements in Section~\ref{sec.Hoffman}.  Each of these statements is an instantiation of the generic characterization of the Hoffman constant 
$\H(\Phi \vert \L)$ for suitable choices of $\Phi$ and $\L$.  

Throughout the paper whenever we work with an Euclidean space $\R^d$, we will assume that it is endowed with a norm $\|\cdot\|$ and inner product $\ip{\cdot}{\cdot}$.  Unless we explicitly state otherwise, our results  apply to arbitrary norms.

\section{Hoffman constants for systems of linear constraints}
\label{sec.Hoffman}

This section describes a characterization for the Hoffman constant $H(A)$  in~\eqref{eq:simple_erro} for systems of linear inequalities
\[
Ax \le b.
\]
We subsequently consider analogous Hoffman constants for systems of linear equations and inequalities
\[
\begin{array}{l}
Ax = b \\
Cx \le d.
\end{array}
\]
Although the latter case with equations and inequalities subsumes the former case, for exposition purposes we discuss separately the case with inequalities only.  The notation and main ideas in this case are simpler and easier to grasp.  The crux of the characterization of $H(A)$ based on a canonical collection of submatrices of $A$ is more apparent.

We defer the proofs of all propositions in this section to Section~\ref{sec.proof.Hoffman}, where we show that they follow from a characterization of a generic Hoffman constant for polyhedral sublinear mappings (Theorem~\ref{thm.main}).  We will rely on the following terminology.  Recall that a set-valued mapping $\Phi: \R^n \rightrightarrows \R^m$ assigns a set $\Phi(x) \subseteq \R^m$ to each $x\in \R^n.$  A set-valued mapping $\Phi: \R^n \rightrightarrows \R^m$ is {\em surjective} if $\Phi(\R^n) = \bigcup_{x\in \R^n} \Phi(x)= \R^m$.  More generally, $\Phi$ is {\em relatively surjective} if $\Phi(\R^n)$ is a linear subspace.

\subsection{The case of inequalities only}
Proposition~\ref{prop.Hoffman.gral} below gives a  characterizations of the {\em sharpest} Hoffman constant $H(A)$ such that \eqref{eq:simple_erro} holds. 
The characterization is stated in terms of a canonical collection of submatrices of $A$ that define surjective sublinear mappings.

Let $A \in \R^{m\times n}$.  We shall say that a set $J\subseteq \{1,\dots,m\}$ is {\em $A$-surjective} if the set-valued mapping $x\mapsto Ax + \{s\in \R^m: s_J \ge 0\}$ is surjective.  Equivalently, $J$ is $A$-surjective if  $A_J\R^n + \R^J_+ = \R^J,$ where $A_J$ denotes the submatrix of $A$ determined by the rows in $J$.  
For $A \in \R^{m\times n}$ let $\sJ(A)$ denote the following collection of subsets of $\{1,\dots,m\}$:
\[
\sJ(A):= \{J \subseteq \{1,\dots,m\}: J \text{ is $A$-surjective}\}.
\]
For $A \in \R^{m\times n}$ let
\begin{equation}\label{eq.Hoffman.matr}
H(A):=\dmax_{J\in \sJ(A)} H_J(A),
\end{equation}
where 
\[
H_J(A):= \dmax_{v\in \R^m \atop \|v\|\le 1} \dmin_{x\in \R^n \atop A_Jx \le v_J} \|x\|
\]
for each $J\in \sJ(A)$.  By convention $H_J(A) = 0$ if $J = \emptyset$.

Observe that the set $\sJ(A)$ is independent of the particular norms in $\R^n$ and $\R^m$.  On the other hand, the values of $H_J(A), J \in \sJ(A)$ and $H(A)$ certainly depend on these norms.  The constant $H(A)$ defined in~\eqref{eq.Hoffman.matr} is the sharpest constant satisfying~\eqref{eq:simple_erro}.


\begin{proposition}\label{prop.Hoffman.gral} 
Let  $A\in \R^{m\times n}$.  Then for all $b \in \R^m$ such that $P_{A,b} := \{x\in\R^n: Ax\le b\}\ne \emptyset$ and all $u\in \R^n$
\begin{equation}\label{eq.Hoffman.bound.matr}
\dist(u,P_{A,b}) \le H(A)\cdot \dist(b,Au + \R^m_+) \le H(A)\cdot\|(Au-b)_+\|.
\end{equation}
Furthermore, the first bound is tight: If $H(A)>0$ then there exist $b\in \R^m$ such that 
$P_{A,b} \ne \emptyset$ and $u \not \in P_{A,b}$ such that 
\[
\dist(u,P_{A,b}) = H(A)\cdot \dist(b,Au + \R^m_+).
\]
\end{proposition}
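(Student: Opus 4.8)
The plan is to treat the two bounds in~\eqref{eq.Hoffman.bound.matr} separately, prove the substantive (first) one by convex duality, and establish tightness by an explicit construction. For $b\in\R^m$ with $P_{A,b}\ne\emptyset$ and $u\in\R^n$ I write $c:=b-Au$; substituting $x=u+z$ gives $\dist(u,P_{A,b})=\dmin\{\|z\|:Az\le c\}$ and $\dist(b,Au+\R^m_+)=\dmin\{\|c-s\|:s\ge 0\}=\dist(c,\R^m_+)$, while $P_{A,b}\ne\emptyset$ is equivalent to $c\in A\R^n+\R^m_+$. The second inequality in~\eqref{eq.Hoffman.bound.matr} is then immediate by taking $s=(c)_+\ge 0$: since $c-(c)_+=-(c)_-$ and $(c)_-=(Au-b)_+$, we get $\dist(c,\R^m_+)\le\|(Au-b)_+\|$. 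Everything therefore reduces to the first inequality, which in these terms reads
\[
\dmin\{\|z\|:Az\le c\}\ \le\ H(A)\cdot\dist(c,\R^m_+)\qquad\text{for every }c\in A\R^n+\R^m_+ .
\]
This is precisely what one obtains by instantiating Theorem~\ref{thm.main} with the sublinear mapping $\Phi(x)=Ax+\R^m_+$ and $\L=\R^m$; below I sketch a self-contained argument.

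For this core inequality I would use linear programming duality. The problem $\dmin\{\|z\|:Az\le c\}$ is convex with affine constraints and feasible, so it has no duality gap and the dual optimum is attained:
\[
\dmin_{Az\le c}\|z\|\ =\ \dmax\{-\ip{y}{c}: y\ge 0,\ \|A\transp y\|^*\le 1\}.
\]
Let $\bar y$ attain the maximum. The crux is that $\bar y$ can be chosen so that $J:=\{i:\bar y_i>0\}$ is $A$-surjective: if $J$ is not $A$-surjective then, separating the convex cone $A_J\R^n+\R^J_+$ from a point not in it, one finds a nonzero $\nu\ge 0$ supported in $J$ with $A\transp\nu=0$; then $\bar y\pm t\nu$ is dual-feasible for small $t>0$, so optimality of $\bar y$ forces $\ip{\nu}{c}=0$, hence $\bar y-t\nu$ remains optimal until one of its coordinates indexed by $J$ hits $0$, which strictly shrinks $J$; iterating terminates at an optimal $\bar y$ with $A$-surjective support. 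With such a $\bar y$, for every $s\ge 0$ one has $-\ip{\bar y}{c}\le -\ip{\bar y}{c}+\ip{\bar y}{s}=\ip{\bar y}{s-c}\le\|\bar y\|^*\|s-c\|$, so $-\ip{\bar y}{c}\le\|\bar y\|^*\dist(c,\R^m_+)$; and dualizing the inner minimization in the definition of $H_J(A)$ and maximizing over $\{v:\|v\|\le 1\}$ identifies $H_J(A)$ with the maximum of $\|y\|^*$ over those $y\ge 0$ supported in $J$ with $\|A\transp y\|^*\le 1$, for which $\bar y$ is feasible, so $\|\bar y\|^*\le H_J(A)\le H(A)$. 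Combining, $\dmin\{\|z\|:Az\le c\}=-\ip{\bar y}{c}\le H(A)\,\dist(c,\R^m_+)$.

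For the tightness claim I would reverse-engineer equality in this chain. Choose $J^*\in\sJ(A)$ with $H_{J^*}(A)=H(A)>0$ and $v^*\in\R^m$, $\|v^*\|=1$, attaining the inner maximum in $H_{J^*}(A)$; one may take $v^*$ supported in $J^*$, and since $H(A)>0$ the vector $v^*_{J^*}$ has a negative entry. Let $x^*$ be a minimum-norm solution of $A_{J^*}x\le v^*_{J^*}$, so $\|x^*\|=H(A)$. Put $u:=0$ and define $b$ by $b_{J^*}:=v^*_{J^*}$ and $b_i:=(Ax^*)_i+M$ for $i\notin J^*$, with $M$ large enough that $b_i\ge 0$ there. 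Then $x^*\in P_{A,b}$, so $P_{A,b}\ne\emptyset$; $u=0\notin P_{A,b}$ because $v^*_{J^*}\not\ge 0$; and $\dist(u,P_{A,b})$ is trapped between $\dmin\{\|z\|:A_{J^*}z\le v^*_{J^*}\}=H(A)$ and $\|x^*\|=H(A)$, so it equals $H(A)$. Finally, choosing $v^*\le 0$ at the outset (replacing $v^*$ by $\min(v^*,0)$, which only enlarges the inner objective and hence still attains the maximum) makes $\dist(b,Au+\R^m_+)=\dist(b,\R^m_+)=\|(b)_-\|=\|(v^*)_-\|=\|v^*\|=1$, and equality in~\eqref{eq.Hoffman.bound.matr} follows.

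I expect the real difficulty to be the support-reduction step above — arranging that an optimal dual solution has $A$-surjective support — because this is exactly the combinatorial content behind the canonical collection $\sJ(A)$ and is what Theorem~\ref{thm.main} packages at the level of sublinear mappings. A secondary subtlety is that the reductions in the tightness argument (passing to $v^*\le 0$ and evaluating $\dist(\cdot,\R^m_+)$) are cleanest when the norm on $\R^m$ is monotone; for an arbitrary norm one should run the construction inside the sublinear-mapping framework of Theorem~\ref{thm.main}, where $\dist(b,Au+\R^m_+)$ enters intrinsically rather than through the residual $\|(Au-b)_+\|$.
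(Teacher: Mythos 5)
Your proof of the first inequality is correct but takes a genuinely different route from the paper. The paper obtains Proposition~\ref{prop.Hoffman.gral} as the special case $L=\{1,\dots,m\}$ of Proposition~\ref{prop.Hoffman.gral.rest}, which is itself an instantiation of Theorem~\ref{thm.main}: one identifies the tangent cones of $\graph(\Phi)$ for $\Phi(x)=Ax+\R^m_+$ with the sets $T_J$, $J\subseteq\{1,\dots,m\}$, notes that $\Phi_{T_J}$ is relatively surjective exactly when $J\in\sJ(A)$, and invokes the general bound, whose proof rests on Lemma~\ref{lemma.rel.surj} (a minimal-face/extreme-point argument in the dual cone) plus a compactness-and-contradiction argument along the direction $(b-v)/\|b-v\|$. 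You instead work directly with the instance: dualize $\min\{\|z\|:Az\le c\}$, purify an optimal dual multiplier so that its support is $A$-surjective (your support-reduction step, using a Gordan-type certificate $\nu\ge 0$, $A\transp\nu=0$, is exactly the elementary, finite-dimensional counterpart of Lemma~\ref{lemma.rel.surj}), and then chain $-\ip{\bar y}{c}\le\|\bar y\|^*\dist(c,\R^m_+)$ with $\|\bar y\|^*\le H_J(A)\le H(A)$, re-deriving \eqref{eq.HA.J} along the way. This buys a shorter, self-contained proof of this particular proposition; what it does not buy is the paper's general statement for polyhedral sublinear mappings and arbitrary subspaces $\L$, which the paper reuses for Propositions~\ref{prop.Hoffman.gral.rest}--\ref{prop.facial.dist}.

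The one genuine (though small and fixable) gap is in your tightness argument for arbitrary norms, which you yourself flag: zeroing the coordinates of $v^*$ outside $J^*$, replacing $v^*$ by $\min(v^*,0)$, and the identity $\dist(b,\R^m_+)=\|(b)_-\|$ all require monotonicity of the norm on $\R^m$, whereas the proposition is asserted for arbitrary norms. Your fallback of ``running the construction inside Theorem~\ref{thm.main}'' is legitimate but then requires the identification of $\S(\Phi)$ and $\|\Phi_{T_J}^{-1}\|$ with $\sJ(A)$ and $H_J(A)$, i.e.\ the content of the paper's proof of Proposition~\ref{prop.Hoffman.gral.rest}, which you do not spell out. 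In fact none of the monotonicity-dependent reductions are needed: keep $v^*$ as it is (any maximizer with $\|v^*\|=1$), set $u=0$, $b_{J^*}=v^*_{J^*}$, $b_i=(Ax^*)_i+M$ for $i\notin J^*$ with $M$ large enough that $b\ge v^*$ off $J^*$; then $s:=b-v^*\ge 0$ gives $\dist(b,\R^m_+)\le\|b-s\|=\|v^*\|=1$, while the already-proved first inequality together with $\dist(u,P_{A,b})=H(A)$ forces $\dist(b,\R^m_+)\ge 1$, so equality holds for any norm. With that adjustment your construction matches the strength of the paper's tightness claim.
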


The following proposition complements Proposition~\ref{prop.Hoffman.gral} and yields a procedure to compute $H_J(A)$ for $J\in \sJ(A)$.
\begin{proposition}\label{prop.Hoffman.A.surj} 
Let  $A\in \R^{m\times n}$.  Then  for all $J\in \sJ(A)$
\begin{equation}\label{eq.HA.J}
H_J(A) = \dmax_{y\in \R^m \atop \|y\|\le 1} \dmin_{x\in \R^n \atop A_Jx \le y_J} \|x\| = \dmax_{v \in \R^J_+ \atop \|A_J\transp v\|^*\le 1} \|v\|^*   = \frac{1}{\dmin_{v \in \R^J_+, \; \|v\|^* =1 } \|A_J\transp v\|^*}.
\end{equation}
If the mapping $x\mapsto Ax+\R^m_+$ is surjective then
\begin{equation}\label{eq.HA}
H(A) = \dmax_{y\in \R^m \atop \|y\|\le 1} \dmin_{x\in \R^n \atop Ax \le y} \|x\| = \dmax_{v \in \R^m_+ \atop \|A\transp v\|^*\le 1} \|v\|^*
= \frac{1}{\dmin_{v \in \R^m_+, \; \|v\|^* =1 } \|A\transp v\|^*}.
\end{equation}
\end{proposition}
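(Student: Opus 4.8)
The plan is to establish the chain of equalities in~\eqref{eq.HA.J} by first noting that the leftmost quantity is exactly the definition of $H_J(A)$, and then proving the middle equality via convex duality. For the middle equality, I would reformulate $\max_{\|y\|\le 1} \min_{A_J x \le y_J} \|x\|$ by observing that, since $J$ is $A$-surjective, the inner minimization is always feasible (for every $y_J \in \R^J$ there is $x$ with $A_J x \le y_J$), so the inner value is finite and attained. Fixing $y$, the inner problem $\min\{\|x\| : A_J x \le y_J\}$ is a convex program whose Lagrangian dual is $\max\{-\ip{v}{y_J} : v \in \R^J_+, \ \|A_J\transp v\|^* \le 1\}$; strong duality holds because the primal is feasible and bounded below (its value is $\ge 0$). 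Substituting this dual back and then maximizing over $\|y\| \le 1$ turns the outer problem into $\max\{-\ip{v}{y_J} : \|y\|\le 1, \ v\in\R^J_+, \ \|A_J\transp v\|^* \le 1\}$; carrying out the maximization over $y$ first gives $\max_{\|y\|\le1}(-\ip{v}{y_J}) = \|v\|^*$ (choosing $y$ supported on $J$ aligned against $v$), which yields the second expression $\max\{\|v\|^* : v\in\R^J_+, \ \|A_J\transp v\|^*\le 1\}$. The final equality with the reciprocal is a routine positive-homogeneity rescaling: on the cone $\R^J_+$, scaling $v$ shows $\max\{\|v\|^* : \|A_J\transp v\|^*\le 1\} = 1/\min\{\|A_J\transp v\|^* : v\in\R^J_+, \ \|v\|^* = 1\}$, with the convention that the ratio is $0$ when $A_J\transp v = 0$ forces $v$ out of the constraint set (handled by the $J=\emptyset$ convention and the surjectivity hypothesis, which guarantees $\Ker A_J\transp \cap \R^J_+ = \{0\}$).

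For the second part, when $x \mapsto Ax + \R^m_+$ is surjective, I would argue that $\{1,\dots,m\} \in \sJ(A)$ and that this full index set dominates the maximum in~\eqref{eq.Hoffman.matr}. Concretely, $H_J(A) \le H_{\{1,\dots,m\}}(A)$ for every $J \in \sJ(A)$: any dual-feasible $v \in \R^J_+$ for the $J$-problem extends by zeros to a dual-feasible vector in $\R^m_+$ for the full problem with the same norm, since the zero-padded vector satisfies $\|A\transp v\|^* = \|A_J\transp v\|^* \le 1$. Hence $H(A) = H_{\{1,\dots,m\}}(A)$, and applying~\eqref{eq.HA.J} with $J = \{1,\dots,m\}$ gives~\eqref{eq.HA}.

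The main obstacle I anticipate is the careful handling of the duality argument for \emph{arbitrary} norms: I must ensure strong duality and attainment in $\min\{\|x\| : A_J x \le y_J\}$ without smoothness or strict convexity of $\|\cdot\|$, and correctly identify the dual of the norm constraint as $\|A_J\transp v\|^* \le 1$. This is where the $A$-surjectivity of $J$ is essential — it provides a feasible point (in fact guarantees the feasible region is a full-dimensional polyhedron meeting every shifted orthant), which gives the constraint qualification needed for strong duality, and it also rules out the degenerate case where $\min_{v\in\R^J_+, \|v\|^*=1}\|A_J\transp v\|^* = 0$ so that the reciprocal in the last expression of~\eqref{eq.HA.J} is well-defined and finite. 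A clean way to sidestep fussy Slater-type conditions is to invoke the polyhedral version of the strong duality / Farkas framework, or simply to quote Theorem~\ref{thm.main} with $\Phi = A_J(\cdot) + \R^J_+$ and $\L = \R^J$, as the paper signals it will; I would present the self-contained convex-duality derivation as the primary route and remark that it is the relevant specialization of Theorem~\ref{thm.main}.
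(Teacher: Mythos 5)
Your proposal is correct, but it follows a more self-contained route than the paper. The paper disposes of this proposition in two lines: it is the special case $L=\{1,\dots,m\}$ of Proposition~\ref{prop.Hoffman.A.surj.rest}, whose proof applies the abstract norm-duality result (Proposition~\ref{prop.Hoffman.surj}) to the surjective mapping $x\mapsto Ax+\{s\in\R^m:s_J\ge 0\}$ and identifies its upper adjoint, the duality itself having been done once and for all in~\eqref{eq.norm.Hoffman}. You instead run the Lagrangian duality directly: for each fixed $y$ the problem $\min\{\|x\|:A_Jx\le y_J\}$ is feasible by $A$-surjectivity (indeed strictly feasible), its dual is $\max\{-\ip{v}{y_J}: v\in\R^J_+,\ \|A_J\transp v\|^*\le 1\}$ with zero gap since the constraints are polyhedral and the objective is finite-valued, and exchanging the two maximizations plus positive homogeneity (with Gordan ruling out $v\in\R^J_+\setminus\{0\}$, $A_J\transp v=0$) gives~\eqref{eq.HA.J}; zero-padding dual variables then yields~\eqref{eq.HA}, matching the paper's monotonicity observation $J\subseteq F\Rightarrow H_J(A)\le H_F(A)$. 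What your route buys is elementarity; what the paper's buys is uniformity, since the same specialization of Proposition~\ref{prop.Hoffman.surj} covers all the other propositions of Section~\ref{sec.Hoffman}. Two small touch-ups: the step $\max_{\|y\|\le 1}\bigl(-\ip{v}{y_J}\bigr)=\|v\|^*$ is correct because $\|v\|^*$ here means (as in the paper) the dual norm of the zero-padded vector and the maximum runs over the whole unit ball of $\R^m$; your parenthetical about choosing $y$ supported on $J$ should be dropped, since for a general norm the maximizer need not be $J$-supported and restricting the support can strictly decrease the value. Also, the result to cite for these dual formulas is Proposition~\ref{prop.Hoffman.surj} (via Proposition~\ref{prop.Hoffman.A.surj.rest}), not Theorem~\ref{thm.main}, which gives the error bound rather than the dual characterization.
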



\medskip

The identity~\eqref{eq.HA} in Proposition~\ref{prop.Hoffman.A.surj} has the following geometric interpretation.  By Gordan's theorem, the mapping $x \mapsto Ax + \R^m_+$ is surjective if and only if $0 \not\in \{A\transp v: v\ge 0, \, \|v\|^* = 1\}$.
When this is the case, the quantity $1/H(A)$ is precisely the distance (in the dual norm $\|\cdot\|^*$) from the origin to $\{A\transp v: v\ge 0, \, \|v\|^* = 1\}$.  The latter quantity in turn equals the {\em distance to non-surjectivity} of the mapping $x \mapsto Ax + \R^m_+$, that is, the norm of the smallest perturbation matrix $\Delta A \in \R^{m\times n}$ such that $x \mapsto (A+\Delta A)x + \R^m_+$ is not surjective as it is detailed in~\cite{Lewi99}.  This distance to non-surjectivity is the same as Renegar's {\em distance to ill-posedness} of the system of linear inequalities $Ax < 0$ defined by $A$.  The distance to ill-posedness provides the main building block for Renegar's concept of {\em condition number} for convex optimization introduced in the seminal papers~\cite{Rene95a,Rene95b} that has been further extended in ~\cite{AmelB12,BurgC13,EpelF02,FreuV99a,FreuV03,Freu04,Pena00,Pena03} among  many other articles.  

\medskip

The identities~\eqref{eq.HA.J} and~\eqref{eq.HA} readily yield the following bound on $H(A)$ previously established in~\cite{burke1996,guler1995}
\begin{align*}
H(A) &= \max_{J \in \sJ(A)} \max\{\|v\|^*: v \in\R^J_+, \|A_J\transp v\|^* \le 1\}\\
&= \max_{J \in \sJ(A)} \max\{\|\tilde v\|^*: \tilde v \in \ext\{v \in\R^J_+, \|A\transp v\|^* \le 1\}\}
\\
&\le \max\{\|\tilde v\|^*: \tilde v \in \ext\{v \in\R^m_+, \|A\transp v\|^* \le 1\}\}.
\end{align*}
In the above expressions $\ext(C)$ denotes the set of extreme points of a closed convex set $C$.

\medskip


Let $A\in \R^{m\times n}.$  Observe that if $J \subseteq F \subseteq\{1,\dots,m\}$ and $F$ is $A$-surjective then $J$ is $A$-surjective.  In other words, if $J \subseteq F \in \sJ(A)$ then $F$ provides a {\em certificate of surjectivity} for $J$.  Equivalently, if $I \subseteq J$ and $I$ is not $A$-surjective then $J$ is not $A$-surjective, that is, $I$ provides a {\em certificate of non-surjectivity} for $J$.  The following corollary of Proposition~\ref{prop.Hoffman.gral} takes this observation a bit further and provides the crux of our combinatorial algorithm to compute $H(A)$.

\begin{corollary}\label{corol.sets} Let $A \in \R^{m\times n}$.  Suppose $\F\subseteq \sJ(A)$ and $\I \subseteq 2^{\{1,\dots,m\}}\setminus \sJ(A)$  provide joint certificates of surjectivity and non-surjectivity for all subsets of $\{1,\dots,m\}$.  In other words, for all $J \subseteq \{1,\dots,m\}$ either  $J\subseteq F$ for some $F \in \F,$
or $I\subseteq J$ for some $I \in \I.$   Then 
\[
H(A) =  \dmax_{F \in \F} H_F(A).
\]
\end{corollary}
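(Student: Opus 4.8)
The plan is to derive Corollary~\ref{corol.sets} directly from the characterization \eqref{eq.Hoffman.matr} of $H(A)$ as $\max_{J\in\sJ(A)} H_J(A)$ together with the monotonicity observation recorded just before the corollary, namely that $J\subseteq F\in\sJ(A)$ implies $J\in\sJ(A)$. The key structural fact I would isolate first is a monotonicity property of the quantities $H_J(A)$: if $J\subseteq F$ then $H_J(A)\le H_F(A)$. This follows because the feasible region $\{x : A_J x\le y_J\}$ in the definition of $H_J(A)$ contains the feasible region $\{x : A_F x\le y_F\}$, so for every $y$ with $\|y\|\le 1$ the inner minimum over the larger set $\{x: A_J x\le y_J\}$ is no larger than the inner minimum over $\{x: A_F x\le y_F\}$; taking the max over $y$ preserves the inequality. (One should double-check that both inner minima are attained / finite when $F\in\sJ(A)$, which is guaranteed by Proposition~\ref{prop.Hoffman.A.surj} since $J\in\sJ(A)$ as well.)

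With these two ingredients the argument is short. Let $J\in\sJ(A)$ be arbitrary. By hypothesis, the pair $(\F,\I)$ provides joint certificates, so either $J\subseteq F$ for some $F\in\F$, or $I\subseteq J$ for some $I\in\I$. The second alternative is impossible: $I\notin\sJ(A)$ would then certify that $J\notin\sJ(A)$, contradicting $J\in\sJ(A)$. Hence $J\subseteq F$ for some $F\in\F$, and by the monotonicity of $H_\bullet(A)$ we get $H_J(A)\le H_F(A)\le \max_{F\in\F} H_F(A)$. Taking the maximum over all $J\in\sJ(A)$ and using \eqref{eq.Hoffman.matr} yields $H(A)=\max_{J\in\sJ(A)} H_J(A)\le \max_{F\in\F} H_F(A)$. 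The reverse inequality $\max_{F\in\F} H_F(A)\le H(A)$ is immediate since $\F\subseteq\sJ(A)$.

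I expect the only real point requiring care — the ``main obstacle'', such as it is — to be the monotonicity claim $H_J(A)\le H_F(A)$ for $J\subseteq F$, and specifically making sure the containment of feasible sets is used in the correct direction and that finiteness of the relevant minima holds. Concretely, $A_F x\le y_F$ imposes all the constraints that $A_J x\le y_J$ does plus possibly more, so $\{x: A_F x\le y_F\}\subseteq\{x: A_J x\le y_J\}$, and therefore $\min_{A_J x\le y_J}\|x\|\le\min_{A_F x\le y_F}\|x\|$ whenever the right-hand feasible set is nonempty; when $F\in\sJ(A)$ this set is nonempty for every $y$, so no delicate case analysis is needed. Everything else is bookkeeping with the definition of joint certificates and the elementary fact that a maximum over a set is bounded by a maximum over a superset. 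No alternative strategy seems warranted: the corollary is essentially a direct consequence of Proposition~\ref{prop.Hoffman.gral} (via \eqref{eq.Hoffman.matr}) combined with the two easy monotonicity observations, exactly as the surrounding text advertises.
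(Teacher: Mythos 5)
Your proof is correct and follows essentially the same route as the paper's: for any $J\in\sJ(A)$ the certificate hypothesis forces $J\subseteq F$ for some $F\in\F$, the monotonicity $H_J(A)\le H_F(A)$ then gives $\max_{J\in\sJ(A)}H_J(A)\le\max_{F\in\F}H_F(A)$, and the reverse inequality is immediate from $\F\subseteq\sJ(A)$. The only difference is that you spell out the containment-of-feasible-sets argument behind the monotonicity claim, which the paper states without proof.
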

\begin{proof}
The conditions on $\F$ and $\I$ imply that for all $J \in \sJ(A)$ there exists $F \in \F$ such that $J\subseteq F$.  The latter condition implies that $H_J(A) \le H_F(A)$. Therefore Proposition~\ref{prop.Hoffman.gral} yields
$$
H(A) = \dmax_{J \in \sJ(A)} H_J(A) =  \dmax_{F \in \F} H_F(A).
$$
\end{proof}

\medskip

Proposition~\ref{prop.Hoffman.gral}, Proposition~\ref{prop.Hoffman.A.surj}, and Corollary~\ref{corol.sets}  extend to the more general context when some of the inequalities in $Ax \le b$ are easy to satisfy.  This occurs in particular when some of the inequalities $Ax \le b$ are of the form $ x \le u$ or $ -x \le -\ell$.    It is thus  natural to consider a refinement of the Hoffman constant $H(A)$ that reflects the presence of this kind of easy-to-satisfy constraints.  

Let $A\in \R^{m\times n}$ and $L\subseteq \{1,\dots,m\}$.  Let $L^c:= \{1,\dots,m\}\setminus L$ denote the complementary set of $L$.
Define 
\begin{equation}\label{eq.Hoffman.matr.rest}
H(A\vert L):=\dmax_{J\in \sJ(A)} H_J(A\vert L)
\end{equation}
where
\[
H_J(A\vert L):= \dmax_{y\in \R^L \atop \|y\|\le 1} \dmin_{x\in \R^n \atop A_Jx \le y_J} \|x\|
\]
for each $J\in \sJ(A)$.  For ease of notation, the latter expression uses the convention that $y_j = 0$ whenever $j \in J\setminus L$.  In particular, observe that $H_J(A\vert L) = 0$ if $J\cap L = \emptyset.$  For $b\in \R^m$ and $S\subseteq \R^m$ let
\[
\dist_L(b,S):= \inf\{\|b-y\|: y\in S, (b-y)_{L^c} = 0\}.
\]
Evidently $\dist_L(b,S) < \infty$ if and only if $(S-b) \cap \{y \in \R^m: y_{L^c} = 0\}\ne \emptyset$.

Proposition~\ref{prop.Hoffman.gral}, Proposition~\ref{prop.Hoffman.A.surj}, and Corollary~\ref{corol.sets} extend to a system of inequalities of the form $Ax \le b$ where the subset of inequalities $A_{L^c} x \le b_{L^c}$ is easy to satisfy. 
 
\begin{proposition}\label{prop.Hoffman.gral.rest}  
Let  $A\in \R^{m\times n}$ and $L\subseteq\{1,\dots,m\}.$  Then for all $b \in \R^m$ such that $P_{A,b} := \{x\in\R^n: Ax\le b\}\ne \emptyset$ and all $u\in \{x \in \R^n: A_{L^c} x \le b_{L^c}\}$
\begin{equation}\label{eq.Hoffman.bound.matr}
\dist(u,P_{A,b}) \le H(A\vert L)\cdot \dist_L(b,Au + \R^m_+) \le H(A\vert L)\cdot\|(A_Lu-b_L)_+\|.
\end{equation}
Furthermore, the first bound is tight: If $H(A\vert L) >0$ then there exist $b\in \R^m$ 
such that $P_{A,b} \ne \emptyset$ and $u \in 
 \{x \in \R^n: A_{L^c} x \le b_{L^c}\} \setminus P_{A,b}$ such that 
\[
\dist(u,P_{A,b}) = H(A\vert L)\cdot \dist_L(b,Au + \R^m_+).
\]
\end{proposition}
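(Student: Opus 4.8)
## Proof Plan for Proposition~\ref{prop.Hoffman.gral.rest}

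The plan is to derive this proposition as a specialization of the generic Hoffman-constant characterization for polyhedral sublinear mappings (Theorem~\ref{thm.main}, whose statement we are allowed to assume), exactly as the paper announces it will do for all propositions in this section. The key observation is that the constraint system $Ax \le b$ together with the restriction that the residual is only allowed to move in the coordinates indexed by $L$ is precisely the setup $\H(\Phi \mid \L)$ with $\Phi$ the sublinear mapping $x \mapsto Ax + \R^m_+$ and $\L = \{y \in \R^m : y_{L^c} = 0\}$. Indeed, $P_{A,b} \ne \emptyset$ means $b \in \Phi(\R^n)$; requiring $u$ to satisfy $A_{L^c}u \le b_{L^c}$ is exactly requiring $b - (Au + s) \in \L$ to be solvable for some $s \in \R^m_+$, i.e. $b \in (Au + \R^m_+) + \L$ after intersecting the slack set appropriately; and $\dist_L(b, Au + \R^m_+)$ is the $\L$-restricted distance appearing in Theorem~\ref{thm.main}. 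So the first step is to spell out this dictionary carefully and verify that $\Phi$ is polyhedral and that $\graph(\Phi) = \{(x,y) : y - Ax \in \R^m_+\}$ is a polyhedral convex cone, so Theorem~\ref{thm.main} applies.

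Second, I would match the canonical collection of sublinear submappings appearing in Theorem~\ref{thm.main} with the index sets $J \in \sJ(A)$. The generic theorem produces $\H(\Phi \mid \L)$ as the maximum, over a canonical family of ``faces'' or restricted submappings of $\Phi$, of their individual norms; when $\Phi(x) = Ax + \R^m_+$, these submappings are exactly the $x \mapsto A_J x + \R^J_+$ for $A$-surjective $J$, because relative surjectivity of the submapping is what makes its norm finite, and $A$-surjectivity of $J$ is by definition the surjectivity of $x \mapsto A_J x + \R^J_+$. The norm of such a submapping, computed relative to the subspace $\L$, is precisely $H_J(A \mid L) = \dmax_{\|y\| \le 1, y \in \R^L} \dmin_{A_J x \le y_J} \|x\|$, with the stated convention $y_j = 0$ for $j \in J \setminus L$. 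Collecting these gives $H(A\mid L) = \max_{J \in \sJ(A)} H_J(A\mid L) = \H(\Phi \mid \L)$, and then the error bound $\dist(u, P_{A,b}) \le \H(\Phi\mid\L) \cdot \dist_\L(b, \Phi(u))$ from Theorem~\ref{thm.main} becomes the first inequality in \eqref{eq.Hoffman.bound.matr}. The second inequality, $\dist_L(b, Au+\R^m_+) \le \|(A_L u - b_L)_+\|$, is elementary: the point $y = b$ with the coordinates in $L$ shifted by $-(A_L u - b_L)_+$ and coordinates in $L^c$ left at $b_{L^c}$ lies in $Au + \R^m_+$ (using $A_{L^c}u \le b_{L^c}$ for the $L^c$ part), agrees with $b$ outside $L$, and is at distance $\|(A_L u - b_L)_+\|$ from $b$.

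Third, for the tightness claim I would invoke the corresponding sharpness statement that Theorem~\ref{thm.main} provides for $\H(\Phi \mid \L)$: when the constant is positive there is a choice of ``right-hand side'' attaining the restricted error bound with equality for the extremal submapping. Translating back through the dictionary, this yields $b \in \R^m$ with $P_{A,b} \ne \emptyset$ and a point $u$ feasible for the easy constraints but not for the full system, with $\dist(u, P_{A,b}) = H(A\mid L)\cdot \dist_L(b, Au + \R^m_+)$. I would need to check that the point produced by the generic argument can indeed be taken to satisfy $A_{L^c} u \le b_{L^c}$; this should follow because the extremal $J$ achieving the maximum can be enlarged (by the certificate-of-surjectivity monotonicity $H_J \le H_F$ for $J \subseteq F \in \sJ(A)$) or because the residual in the $L^c$ coordinates can be absorbed into $b_{L^c}$ without changing the relevant distances.

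The main obstacle I anticipate is purely bookkeeping rather than conceptual: getting the conventions exactly right so that the restricted distance $\dist_L$, the subspace $\L = \{y : y_{L^c} = 0\}$, the convention $y_j = 0$ for $j \in J \setminus L$ in the definition of $H_J(A\mid L)$, and the feasibility requirement $u \in \{x : A_{L^c}x \le b_{L^c}\}$ all line up with the hypotheses of Theorem~\ref{thm.main}. In particular, one must be careful that when $J \cap L = \emptyset$ the submapping contributes $H_J(A\mid L) = 0$ and does not spuriously force the constant to be finite or zero, and that the intersection $\graph(\Phi) \cap (\R^n \times \L)$ or the appropriate restricted graph is still a polyhedral cone so the machinery of Theorem~\ref{thm.main} genuinely covers this case. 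Once the correspondence is pinned down, the proposition is a direct transcription.
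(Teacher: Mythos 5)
Your proposal follows essentially the same route as the paper's own proof: specialize Theorem~\ref{thm.main} to $\Phi(x)=Ax+\R^m_+$ and $\L=\{y\in\R^m: y_{L^c}=0\}$, identify the tangent-cone submappings $\Phi_{T_J}(x)=Ax+\{s: s_J\ge 0\}$ so that relative surjectivity corresponds to $J\in\sJ(A)$ and $\|\Phi_{T_J}^{-1}\vert\L\|=H_J(A\vert L)$, and then read off both the bound and its tightness. The one check you flag at the end is immediate: the tightness statement of Theorem~\ref{thm.main} guarantees $\dist_{\L}(b,\Phi(u))<\infty$, which is exactly the condition $A_{L^c}u\le b_{L^c}$, so no enlargement or absorption argument is needed.
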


\begin{proposition}\label{prop.Hoffman.A.surj.rest} 
Let  $A\in \R^{m\times n}\setminus\{0\}$ and $L\subseteq\{1,\dots,m\}$.  Then  for all $J\in \sJ(A)$
\begin{equation}\label{eq.HA.J.L}
H_J(A\vert L) = 
\dmax_{y\in \R^L \atop \|y\|\le 1} \dmin_{x\in \R^n\atop A_J x \le y_J} \|x\| = 
\dmax_{v \in \R^J_+ \atop \|A_J\transp v\|^*\le 1} \|v_{J\cap L}\|^* =
\frac{1}{\dmin_{v \in \R^{J}_+, \; \|v_{J\cap L}\|^* =1 } \|A_{J}\transp v\|^*}
\end{equation}
with the convention that the denominator in the last expression is $+\infty$ when $J\cap L = \emptyset.$ 

If the mapping $x\mapsto Ax+\R^m_+$ is surjective then
\begin{equation}\label{eq.HA.L}
H(A\vert L) = \dmax_{y\in \R^L \atop \|y\|\le 1} \dmin_{x\in \R^n \atop A_L x \le y} \|x\| = \dmax_{v \in \R^m_+ \atop \|A\transp v\|^*\le 1} \|v_L\|^* = 
\frac{1}{\dmin_{v \in \R^m_+, \; \|v_L\|^* =1 } \|A\transp v\|^*}.
\end{equation}
\end{proposition}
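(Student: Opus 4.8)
The first equality in~\eqref{eq.HA.J.L} is simply the definition of $H_J(A\vert L)$, so what needs proof are its remaining two equalities together with~\eqref{eq.HA.L}. My plan for~\eqref{eq.HA.J.L} is a Lagrangian duality computation in three short steps: for a fixed residual $y$, dualize the inner problem $\min\{\|x\|:A_Jx\le y_J\}$; interchange the resulting pair of maximizations; and pass to the reciprocal form by positive homogeneity. The identity~\eqref{eq.HA.L} will then drop out by specializing~\eqref{eq.HA.J.L} to $J=\{1,\dots,m\}$.

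First I would dualize. Fix $y\in\R^L$ with $\|y\|\le 1$. Because $J\in\sJ(A)$ means $A_J\R^n+\R^J_+=\R^J$, the polyhedron $\{x\in\R^n:A_Jx\le y_J\}$ is nonempty, so the coercive convex function $x\mapsto\|x\|$ attains a finite minimum over it; since the constraint is affine, strong convex duality holds here with attained dual optimum (no Slater condition is needed), and using $\inf_{x}(\|x\|+\ip{w}{x})=0$ when $\|w\|^*\le 1$ and $-\infty$ otherwise one obtains
\[
\dmin_{x\in\R^n\atop A_Jx\le y_J}\|x\|\;=\;\max\bigl\{-\ip{v}{y_J}\;:\; v\in\R^J_+,\ \|A_J\transp v\|^*\le 1\bigr\}.
\]
Substituting this into the definition of $H_J(A\vert L)$ and interchanging the two maxima (harmless, since both are maxima and the inner feasible set does not depend on $y$) gives
\[
H_J(A\vert L)\;=\;\max_{v\in\R^J_+,\ \|A_J\transp v\|^*\le 1}\;\Bigl(\max_{y\in\R^L,\ \|y\|\le 1}\bigl(-\ip{v}{y_J}\bigr)\Bigr).
\]
Since $y$ vanishes outside $L$ we have $\ip{v}{y_J}=\ip{v_{J\cap L}}{y_{J\cap L}}$, and the inner maximum over $y$ equals $\|v_{J\cap L}\|^*$ by norm duality (the maximizing $y$ may be taken supported on $J\cap L$); this is the second equality of~\eqref{eq.HA.J.L}.

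The reciprocal form I would handle by cases: if $J\cap L=\emptyset$ then $v_{J\cap L}$ is empty, so the second expression is $0$ and the last is $0=1/(+\infty)$ by the stated convention; otherwise, Gordan's theorem identifies $J\in\sJ(A)$ with the property that $v=0$ is the only $v\in\R^J_+$ with $A_J\transp v=0$, and a routine normalization-and-compactness argument then shows that $\gamma:=\min\{\|A_J\transp v\|^*: v\in\R^J_+,\ \|v_{J\cap L}\|^*=1\}$ is positive and attained, so positive homogeneity of $v\mapsto(\|A_J\transp v\|^*,\|v_{J\cap L}\|^*)$ turns the second expression of~\eqref{eq.HA.J.L} into $1/\gamma$. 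Finally,~\eqref{eq.HA.L} is the case $J=\{1,\dots,m\}$: if $x\mapsto Ax+\R^m_+$ is surjective then $\{1,\dots,m\}\in\sJ(A)$, hence every subset of $\{1,\dots,m\}$ is $A$-surjective, and since enlarging $J$ only adds inequalities --- shrinking the feasible set and so not decreasing the inner minimum --- one has $H_J(A\vert L)\le H_{\{1,\dots,m\}}(A\vert L)$ for every such $J$; thus $H(A\vert L)=\max_{J\in\sJ(A)}H_J(A\vert L)=H_{\{1,\dots,m\}}(A\vert L)$, and applying~\eqref{eq.HA.J.L} with $J=\{1,\dots,m\}$, for which $A_{\{1,\dots,m\}}=A$ and $\{1,\dots,m\}\cap L=L$, yields~\eqref{eq.HA.L}. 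I do not expect a genuinely hard step here; the points that require care are the exactness of strong duality for the inner program with an arbitrary (possibly non-polyhedral) norm, and the evaluation of $\max_{y\in\R^L,\|y\|\le 1}(-\ip{v_{J\cap L}}{y_{J\cap L}})$ as $\|v_{J\cap L}\|^*$, which rests on the norm conventions in force; the essential structural input throughout is the $A$-surjectivity of $J$, used for feasibility of the inner program, for attainment of its minimum, and for positivity of $\gamma$. (Alternatively, both identities are instances of the generic characterization of $\H(\Phi\vert\L)$ in Theorem~\ref{thm.main}.)
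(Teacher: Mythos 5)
Your proposal is correct, but it is more elementary and self-contained than the paper's argument, which simply applies Proposition~\ref{prop.Hoffman.surj} to the surjective polyhedral sublinear mapping $\Phi(x)=Ax+\{s\in\R^m: s_J\ge 0\}$ with $\mathcal L=\{y\in\R^m: y_{L^c}=0\}$, computes the upper adjoint explicitly ($u\in\Phi^*(v)$ iff $u=A\transp v$, $v_J\ge 0$, $v_{J^c}=0$, in which case $\Pi_{\mathcal L}(v)=v_{J\cap L}$), and then notes that \eqref{eq.HA.L} follows from \eqref{eq.HA.J.L}. What you do by hand --- dualizing $\min\{\|x\|:A_Jx\le y_J\}$, interchanging the two maxima, evaluating the inner maximum over $y$ as a dual norm, and passing to the reciprocal by positive homogeneity --- is exactly the content of the paper's computation \eqref{primal.Hoffman}--\eqref{eq.norm.Hoffman} underlying Proposition~\ref{prop.Hoffman.surj}, specialized to this $\Phi$; so the substance coincides, and your version has the merit of making explicit two things the paper leaves implicit: the Gordan/recession-cone argument giving positivity and attainment of $\gamma$ (note the feasible set $\{v\in\R^J_+:\|v_{J\cap L}\|^*=1\}$ is unbounded when $J\setminus L\ne\emptyset$, so the compactness step really is a normalization argument, as you indicate), and the monotonicity $H_J(A\vert L)\le H_{\{1,\dots,m\}}(A\vert L)$ behind $H(A\vert L)=H_{\{1,\dots,m\}}(A\vert L)$. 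Two caveats, both shared with the paper rather than particular to you: first, the identification $\max\{-\ip{v}{y_J}: y\in\R^L,\ \|y\|\le 1\}=\|v_{J\cap L}\|^*$ presumes the maximizing $y$ can be taken in the coordinate subspace; for a truly arbitrary norm on $\R^m$ this computes the norm of the functional restricted to that subspace, which is how $\|\Pi_{\mathcal L}(v)\|^*$ must be read in Proposition~\ref{prop.Hoffman.surj} as well. Second, specializing \eqref{eq.HA.J.L} at $J=\{1,\dots,m\}$ yields the first expression with constraint $Ax\le y$ where $y$ is zero-filled on $L^c$, not the printed constraint $A_Lx\le y$ in \eqref{eq.HA.L}; your derivation (like the paper's ``readily follows'') establishes the zero-filled form, and the printed form agrees with it only under that zero-fill convention, not literally.
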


\begin{corollary}\label{corol.sets.rest} Let $A \in \R^{m\times n}$ and $L\subseteq \{1,\dots,m\}$.  Suppose $\F\subseteq \sJ(A)$ and $\I \subseteq 2^{\{1,\dots,m\}}\setminus \sJ(A)$  are such that for all $J \subseteq \{1,\dots,m\}$ either  $J\subseteq F$ for some $F \in \F,$
or $I\subseteq J$ for some $I \in \I.$   Then 
\[
H(A\vert L) =  \dmax_{F \in \F} H_F(A\vert L).
\]
\end{corollary}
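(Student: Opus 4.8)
The plan is to mimic the proof of Corollary~\ref{corol.sets} verbatim, replacing $H_J(A)$ by $H_J(A\vert L)$ throughout and invoking Proposition~\ref{prop.Hoffman.gral.rest} in place of Proposition~\ref{prop.Hoffman.gral}. The one nontrivial ingredient is the monotonicity statement: if $J \subseteq F$ and both $J, F \in \sJ(A)$, then $H_J(A\vert L) \le H_F(A\vert L)$. Once this is established, the argument is purely combinatorial: the hypotheses on $\F$ and $\I$ guarantee that every $J \in \sJ(A)$ is contained in some $F \in \F$ (it cannot contain any $I \in \I$, since such an $I$ would be non-$A$-surjective and would force $J$ to be non-$A$-surjective, contradicting $J \in \sJ(A)$), so monotonicity gives $H_J(A\vert L) \le H_F(A\vert L) \le \max_{F\in\F} H_F(A\vert L)$, and since $\F \subseteq \sJ(A)$ the reverse inequality is immediate from the definition~\eqref{eq.Hoffman.matr.rest}.

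To prove the monotonicity claim, I would work from the definition
\[
H_J(A\vert L) = \dmax_{y\in \R^L,\, \|y\|\le 1} \;\dmin_{x\in \R^n,\, A_Jx \le y_J} \|x\|,
\]
with the convention $y_j = 0$ for $j \in J\setminus L$. The key point is that enlarging the index set from $J$ to $F$ only adds constraints to the inner minimization: any $x$ feasible for $A_Fx \le y_F$ is a fortiori feasible for $A_Jx \le y_J$ (where on both sides the components outside $L$ are set to $0$). Hence for any fixed $y \in \R^L$ with $\|y\|\le 1$ the inner minimum over the $F$-feasible set is at least the inner minimum over the $J$-feasible set; one must also check that the $F$-feasible set is nonempty, which is exactly where $F \in \sJ(A)$ is used (surjectivity of $x\mapsto A_Fx + \R^F_+$ ensures $A_Fx \le y_F$ has a solution for every right-hand side). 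Taking the max over $y$ then yields $H_J(A\vert L) \le H_F(A\vert L)$. Alternatively, and perhaps more cleanly, one can read this monotonicity directly off the dual formula in Proposition~\ref{prop.Hoffman.A.surj.rest}: writing $H_J(A\vert L) = \max\{\|v_{J\cap L}\|^* : v \in \R^J_+,\ \|A_J\transp v\|^* \le 1\}$, a feasible $v$ for $J$ extends to a feasible $v$ for $F$ by padding with zeros on $F\setminus J$, and this padding leaves $\|v_{J\cap L}\|^* = \|v_{F\cap L}\|^*$ unchanged, giving $H_J(A\vert L)\le H_F(A\vert L)$.

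The only place that requires genuine care is the presence of the easy-constraint set $L$ and the associated convention $y_j = 0$ for $j \notin L$: one must be sure that this convention is applied consistently to both $J$ and $F$ so that the nesting of feasible sets genuinely holds, and that components of $y$ outside $L$ (hence fixed at $0$) do not create a hidden infeasibility for the larger system $A_Fx \le y_F$. This is again handled by $F \in \sJ(A)$: $A$-surjectivity of $F$ is defined purely in terms of $A$ and is independent of $L$, so it guarantees solvability of $A_Fx \le y_F$ for the specific right-hand side with zeros on $F \setminus L$. With that in hand the proof is a one-paragraph repetition of the proof of Corollary~\ref{corol.sets}. I expect no real obstacle here; this corollary is the routine "with easy constraints" analogue of the already-proved Corollary~\ref{corol.sets}.
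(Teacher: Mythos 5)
Your proof is correct and takes essentially the same approach as the paper: Corollary~\ref{corol.sets.rest} is stated there without proof as the direct analogue of Corollary~\ref{corol.sets}, whose argument is exactly your combination of the monotonicity $H_J(A\vert L)\le H_F(A\vert L)$ for $J\subseteq F\in\sJ(A)$ with the definition $H(A\vert L)=\max_{J\in\sJ(A)}H_J(A\vert L)$ and the observation that every $J\in\sJ(A)$ must lie in some $F\in\F$. Your verification of the monotonicity step (nesting of the feasible sets under the $y_j=0$ convention, with $A$-surjectivity of $F$ guaranteeing nonemptiness) simply fills in the detail the paper leaves implicit.
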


\subsection{The case of equations and inequalities}

The previous statements extend to linear systems of equations and inequalities combined.  Proposition~\ref{prop.Hoffman} below gives a bound analogous to~\eqref{eq:simple_erro} for the distance from a point $u\in \R^n$ to a nonempty polyhedron of the form 
\[
\{x\in \R^n: Ax = b, \, Cx \le d\} = A^{-1}(b) \cap P_{C,d}.
\]
Here and throughout this section $A^{-1}:\R^m \rightrightarrows \R^n$ denotes the inverse set-valued mapping of the linear mapping $x\mapsto Ax$ defined by a matrix $A \in \R^{m\times n}$.

\medskip

Let $A \in \R^{m\times n}, \, C \in \R^{p \times n}$.  For $J\subseteq\{1,\dots,p\}$ let $[A,C,J]:\R^n \rightrightarrows \R^m \times \R^p$ be the set-valued mapping defined by
\[
x \mapsto \matr{Ax\\Cx} + \left\{\matr{0\\s}: s\in \R^p, \, s_J \ge 0 \right\}.
\]
Define
\[
\sJ(A;C):=\{J \subseteq \{1,\dots,p\}: [A,C,J] \text{ is relatively surjective}\}.
\]

\begin{proposition}\label{prop.Hoffman} Let $A \in \R^{m\times n}, \, C \in \R^{p \times n}$, and $H:=\dmax_{J\in \sJ(A;C)} H_J$ where
\begin{equation}\label{eq.Hoffman}
H_J:=
\dmax_{(y,w)\in (A\R^n)\times \R^p \atop \|(y,w)\|\le 1} \dmin_{x\in \R^n \atop Ax = y, C_Jx \le w_J} \|x\| = \dmax_{(v,z)\in (A\R^n)\times\R^p_+ \atop z_{J^c} = 0,\|A\transp v + C\transp z\|^* \le 1} \|(v,z)\|^*
= \frac{1}{\dmin_{v\in A\R^n, z\in \R^p_+ \atop z_{J^c} = 0,\|(v,z)\|^* = 1} 
\|A\transp v + C\transp z\|^*}.
\end{equation}
Then for all $b\in \R^m, d\in \R^p$ such that $A^{-1}(b) \cap P_{C,d} := \{x\in \R^n: Ax = b, \, Cx \le d\}\ne \emptyset$ and all $u\in \R^n$
\[
\dist(u,A^{-1}(b)\cap P_{C,d}) \le H\cdot \dist\left(\matr{b\\d},\matr{Au\\Cu} + \{0\}\times\R^p_+\right) \le H\cdot\left\|\matr{Au-b\\(Cu-d)_+}\right\|.
\]
The first bound is tight: If $H>0$ then there exist $b\in \R^m, \, d\in\R^p$ such that $A^{-1}(b)\cap P_{C,d} \ne \emptyset$ and $u \not \in A^{-1}(b)\cap P_{C,d}$ such that 
\[
\dist(u,A^{-1}(b)\cap P_{C,d}) =  H\cdot \dist\left(\matr{b\\d},\matr{Au\\Cu} + \{0\}\times\R^p_+\right).
\]
If $[A,C,\{1,\dots,p\}] : \R^n \rightrightarrows \R^m \times \R^p$ 
 is  surjective then
\begin{equation}\label{eq.Hoffman.surj}
H = \dmax_{v\in \R^m, z\in \R^p_+ \atop \|A\transp v + C\transp z\|^* \le 1} \|(v,z)\|^*
= \frac{1}{\dmin_{v\in \R^m, z\in \R^p_+ \atop \|(v,z)\|^* = 1} 
\|A\transp v + C\transp z\|^*}.
\end{equation}
\end{proposition}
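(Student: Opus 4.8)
The plan is to derive this proposition as an instance of the generic result Theorem~\ref{thm.main} (the characterization of $\H(\Phi \vert \L)$ for a polyhedral sublinear mapping $\Phi$ restricted to the residual subspace $\L$), exactly as announced in the excerpt. First I would set up the correspondence: take the sublinear mapping $\Phi:\R^n \rightrightarrows \R^m\times\R^p$ whose graph is the convex cone $\{(x,y,w): y = Ax,\ w \ge Cx\}$, equivalently $\Phi(x) = \{(Ax,\,Cx + s): s\in\R^p_+\}$. The polyhedron $\{x: Ax=b,\ Cx\le d\}$ is precisely $\Phi^{-1}(b,d)$, and the residual at a point $u$ with $(b,d) - (Au,Cu) \in \{0\}\times\R^p_+$ lives in the subspace $\L := (A\R^n)\times\R^p$, which is why the appropriate constant is $\H(\Phi\vert\L)$ rather than the unrestricted one. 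With these identifications, the distance bound and its tightness are an immediate transcription of the conclusion of Theorem~\ref{thm.main}; the only genuine work is to check that the value $\H(\Phi\vert\L)$ produced by that theorem coincides with $H = \max_{J\in\sJ(A;C)} H_J$ with $H_J$ as in~\eqref{eq.Hoffman}.

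To match the two expressions I would argue in two directions. For the representation of each $H_J$: the canonical submappings appearing in Theorem~\ref{thm.main} are, in this concrete setting, exactly the mappings $[A,C,J]$ for $J$ ranging over index sets of inequality constraints that are ``active'' in a face of $\graph(\Phi)$, and the relative-surjectivity condition in Theorem~\ref{thm.main} translates to $J \in \sJ(A;C)$. For a fixed relatively surjective $J$, the norm of $[A,C,J]$ is $\max_{\|(y,w)\|\le 1}\min\{\|x\|: Ax=y,\ C_Jx\le w_J\}$ (this is just the definition of the norm of the inverse of a relatively surjective sublinear mapping, which is where restricting $(y,w)$ to the relevant subspace matters — hence the constraint $y \in A\R^n$). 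Passing to the dual description is a standard convex-duality / Gordan-type computation: the inner value $\min\{\|x\|: Ax=y, C_Jx\le w_J\}$ has a dual as a supremum over multipliers $(v,z)$ with $v$ free, $z\ge 0$ supported on $J$, and $\|A\transp v + C\transp z\|^* \le 1$; maximizing over $\|(y,w)\|\le 1$ then yields $\max\{\|(v,z)\|^*: v\in A\R^n,\ z\in\R^p_+,\ z_{J^c}=0,\ \|A\transp v + C\transp z\|^*\le 1\}$, and rescaling gives the final reciprocal form. I would then take the max over $J\in\sJ(A;C)$.

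The surjective special case~\eqref{eq.Hoffman.surj} follows by noting that when $[A,C,\{1,\dots,p\}]$ is surjective, $\L = \R^m\times\R^p$, so the constraint $v\in A\R^n$ becomes vacuous; moreover $J = \{1,\dots,p\}$ is then the largest set in $\sJ(A;C)$ and, since $H_J$ is monotone in $J$ (enlarging $J$ only enlarges the feasible set of multipliers in the dual expression, hence can only increase $H_J$ — the same monotonicity used in Corollary~\ref{corol.sets}), the maximum is attained at $J = \{1,\dots,p\}$, giving~\eqref{eq.Hoffman.surj} directly. I expect the main obstacle to be bookkeeping rather than conceptual: namely verifying carefully that the abstract ``canonical collection of submappings'' of Theorem~\ref{thm.main}, when specialized to $\graph(\Phi) = \{(x,Ax,w): w\ge Cx\}$, is precisely indexed by the sets $J\in\sJ(A;C)$ — in particular that faces of this cone correspond exactly to subsets $J$ of inequality indices (the equality block $Ax=y$ contributing no combinatorial choices), and that the relative surjectivity of the submapping in Theorem~\ref{thm.main} is equivalent to the relative surjectivity of $[A,C,J]$ as defined here. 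Once that dictionary is in place, together with the elementary duality computation for $H_J$ and the monotonicity observation, the proposition follows.
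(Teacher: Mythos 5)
Your proposal is correct and follows essentially the same route as the paper: instantiate Theorem~\ref{thm.main} with $\Phi(x)=(Ax,Cx)+\{0\}\times\R^p_+$, identify the tangent cones of $\graph(\Phi)$ with index sets $J\subseteq\{1,\dots,p\}$ and relative surjectivity of the corresponding submapping with $J\in\sJ(A;C)$, and obtain the dual expressions for $H_J$ via the convex-duality argument that the paper packages as Proposition~\ref{prop.Hoffman.surj}. The only cosmetic differences are your choice $\L=(A\R^n)\times\R^p$ versus the paper's $\L=\R^m\times\R^p$ (immaterial, since for $b\in A\R^n$ the residual automatically lies in $(A\R^n)\times\R^p$, and the constraint $v\in A\R^n$ really arises from $\Image(\Phi_{T_J})$ through the projection in Proposition~\ref{prop.Hoffman.surj}), and your monotonicity-in-$J$ argument for the surjective case~\eqref{eq.Hoffman.surj}, both of which are sound.
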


\medskip

Proposition~\ref{prop.Hoffman} also extends to the case when some of the equations or inequalities in $Ax = b, \; Cx \le d$ are easy to satisfy. 
We next detail several special but particularly interesting cases.
The next proposition considers systems of equations and inequalities when the inequalities are easy.  This case plays a central role in~\cite{Garb18}.

\begin{proposition}
\label{prop.Hoffman.std} 
Let $A \in \R^{m\times n}, \, C \in \R^{p \times n},$ and $H:=\dmax_{J\in \sJ(A;C)} H_J$ where
\begin{equation}\label{eq.Hoffman.std}
H_J:= 
\dmax_{y\in \{Ax: C_Jx\le 0\}\atop \|y\|\le 1} \dmin_{x\in \R^n \atop Ax = y, C_Jx \le 0} \|x\|
= \dmax_{(v,z)\in (A\R^n)\times\R^p_+ \atop z_{J^c} = 0, \|A\transp v + C\transp z\|^* \le 1} \|v\|^*
= \frac{1}{\dmin_{(v,z)\in (A\R^n)\times\R^p_+ \atop z_{J^c} = 0,\|v\|^* = 1} 
\|A\transp v + C\transp z\|^*}.
\end{equation}
Then for all $b\in \R^m, d\in \R^p$ such that $A^{-1}(b) \cap P_{C,d}\ne \emptyset$ and all $u\in P_{C,d}$
\[
\dist(u,A^{-1}(b)\cap P_{C,d}) \le  H\cdot\|Au-b\|.
\]
This bound is tight: If $H>0$ then there exist $b\in \R^m, \, d\in\R^p$ such that $A^{-1}(b)\cap P_{C,d} \ne \emptyset$ and $u  \in
 P_{C,d} \setminus A^{-1}(b)$ such that 
\[
\dist(u,A^{-1}(b)\cap P_{C,d}) =  H \cdot \|Au-b\|.
\]
If $[A,C,\{1,\dots,p\}] : \R^n \rightrightarrows \R^m \times \R^p$ 
 is  surjective then
\begin{equation}\label{eq.Hoffman.std.surj}
H=\dmax_{(v,z)\in \R^m\times \R^p_+ \atop \|A\transp v + C\transp z\|^* \le 1} \|v\|^*
= \frac{1}{\dmin_{(v,z)\in \R^m\times \R^p_+ \atop \|v\|^* = 1} 
\|A\transp v + C\transp z\|^*}.
\end{equation}

\end{proposition}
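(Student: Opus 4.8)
The plan is to derive Proposition~\ref{prop.Hoffman.std} as a specialization of Proposition~\ref{prop.Hoffman} by a change of variables that absorbs the easy inequalities into the domain. Concretely, given $A \in \R^{m\times n}$ and $C \in \R^{p\times n}$, I would apply the already-established bound of Proposition~\ref{prop.Hoffman} to a shifted point: fix $b \in \R^m$, $d \in \R^p$ with $A^{-1}(b) \cap P_{C,d} \ne \emptyset$ and $u \in P_{C,d}$, so $Cu \le d$. The key observation is that $\dist(u, A^{-1}(b)\cap P_{C,d})$ is unchanged if we replace $d$ by $Cu$ (since $A^{-1}(b) \cap P_{C,Cu} \subseteq A^{-1}(b)\cap P_{C,d}$ and any projection of $u$ onto the larger set that violates $C_J x \le (Cu)_J$ for some $J$ can be pulled back; more carefully, one checks that the nearest point in the larger polyhedron already satisfies $Cx \le Cu$, or equivalently uses that feasibility of $A^{-1}(b)\cap P_{C,Cu}$ follows from feasibility of the original together with $u \in P_{C,d}$). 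With $d$ replaced by $Cu$, the residual $\matr{Au - b \\ (Cu - Cu)_+} = \matr{Au-b\\0}$, so Proposition~\ref{prop.Hoffman} gives $\dist(u, A^{-1}(b)\cap P_{C,Cu}) \le H' \cdot \|Au - b\|$, where $H'$ is the constant from~\eqref{eq.Hoffman} for the pair $(A,C)$. The remaining work is to show the quantity $H' $ from~\eqref{eq.Hoffman}, when restricted by the fact that only the $\matr{Au-b\\0}$-type residuals matter, collapses to the $H$ of~\eqref{eq.Hoffman.std}.

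The second half of the argument is the norm-identity computation. For each $J \in \sJ(A;C)$, I need to verify the chain of equalities in~\eqref{eq.Hoffman.std}. The leftmost expression restricts $y$ to lie in $\{Ax : C_J x \le 0\}$ and requires $C_J x \le 0$ (rather than $C_J x \le w_J$ with free $w$), reflecting that the inequality residual is forced to zero. I would pass to the dual via the same Lagrangian/conic-duality computation that underlies~\eqref{eq.Hoffman} in Proposition~\ref{prop.Hoffman}: the inner minimization $\min\{\|x\| : Ax = y, C_J x \le 0\}$ has, by convex duality and the relative surjectivity of $[A,C,J]$ (which guarantees the value is finite and the dual is attained), value $\max\{\ip{v}{y} : v \in A\R^n, \exists z \in \R^p_+, z_{J^c}=0, \|A\transp v + C\transp z\|^* \le 1\}$; then maximizing over $\|y\|\le 1$ with $y \in A\R^n$ replaces $\ip{v}{y}$ by $\|v\|^*$ (the dual norm restricted to the subspace $A\R^n$), giving the middle expression, and the rightmost reciprocal form is immediate by homogeneity. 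The surjective case~\eqref{eq.Hoffman.std.surj} then follows because when $[A,C,\{1,\dots,p\}]$ is surjective, $\{1,\dots,p\} \in \sJ(A;C)$ dominates all other $J$ (larger $J$ enlarges the feasible $z$-set hence increases $H_J$), the subspace $A\R^n$ becomes all of $\R^m$, and $H = H_{\{1,\dots,p\}}$ reduces to the stated unconstrained-over-$v$ formula.

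For tightness, I would again transfer the tightness clause of Proposition~\ref{prop.Hoffman}. Since $H > 0$, there is some $J^\star \in \sJ(A;C)$ attaining $H = H_{J^\star}$, and from the dual characterization there exist $v^\star \in A\R^n$, $z^\star \in \R^p_+$ with $z^\star_{(J^\star)^c} = 0$, $\|A\transp v^\star + C\transp z^\star\|^* \le 1$, and $\|v^\star\|^* = H$. I would construct $b$, $d$, $u$ explicitly: choose $u$ and a direction so that the inequality constraints indexed by $J^\star$ are active (hence $u$ sits on the relevant face and lies in $P_{C,d}$), set the equality right-hand side $b = Au - y^\star$ for a unit-norm $y^\star \in A\R^n$ dual-aligned with $v^\star$, and pick $d$ large enough on $(J^\star)^c$ that those constraints are slack. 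The complementary-slackness structure then forces the projection of $u$ to be at distance exactly $H\|Au - b\|$. The main obstacle I anticipate is the first paragraph's reduction step — carefully justifying that replacing $d$ by $Cu$ does not change $\dist(u, A^{-1}(b)\cap P_{C,d})$ and does not destroy nonemptiness — because it requires showing the nearest point to $u$ in the original polyhedron automatically satisfies the tightened inequalities, which uses convexity of the polyhedron together with $u \in P_{C,d}$ (a segment-contraction argument: moving from the projection toward $u$ stays feasible and cannot increase any already-satisfied inequality past its bound). Everything after that is the routine conic-duality bookkeeping already developed for Proposition~\ref{prop.Hoffman}.
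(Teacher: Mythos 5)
Your reduction step contains a genuine gap, and the way you plan to finish would not give the stated constant even if the reduction worked. First, the replacement of $d$ by $Cu$ fails as stated: take $m=n=p=1$, $A=C=1$, $b=d=1$, $u=0$. Then $u\in P_{C,d}$ and $A^{-1}(b)\cap P_{C,d}=\{1\}\ne\emptyset$, but $A^{-1}(b)\cap P_{C,Cu}=\{x: x=1,\ x\le 0\}=\emptyset$, and the nearest point $x=1$ of the original polyhedron violates $Cx\le Cu$. So neither of your two justifications (pulling back the projection, or inferring feasibility of the tightened system) is correct, and Proposition~\ref{prop.Hoffman} cannot be applied to the pair $(b,Cu)$. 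This particular hole is repairable: tighten instead to $d':=\max(Cu,Cx_0)$ componentwise with $x_0\in A^{-1}(b)\cap P_{C,d}$, so that $d'\le d$, $u\in P_{C,d'}$, $x_0\in A^{-1}(b)\cap P_{C,d'}$, and the residual of $(b,d')$ at $u$ is $\|Au-b\|$.

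The deeper problem is that even with such a repair, a black-box application of Proposition~\ref{prop.Hoffman} yields the bound with the constant of \eqref{eq.Hoffman}, whose dual form maximizes $\|(v,z)\|^*$, not the (generally smaller) constant $H$ of \eqref{eq.Hoffman.std}, which maximizes $\|v\|^*$ only. Your sentence that the constant ``collapses'' because only residuals of the form $\matr{Au-b\\0}$ matter is precisely the content of Proposition~\ref{prop.Hoffman.std}, and it cannot be extracted from the statement of Proposition~\ref{prop.Hoffman}: its constant is a single number already maximized over all residual directions, so restricting the residuals a posteriori does nothing. To obtain the sharper constant you must carry the information that the residual lies in the subspace $\R^m\times\{0\}$ through the whole argument; this is exactly what the paper's subspace-restricted Hoffman constant $\H(\Phi\vert\L)$ is for, and the paper's proof of Proposition~\ref{prop.Hoffman.std} is the proof of Proposition~\ref{prop.Hoffman} verbatim with $\L=\R^m\times\{0\}$ in Theorem~\ref{thm.main}, the projection $\Pi_{\Image(\Phi_{T_J})\cap\L}$ in Proposition~\ref{prop.Hoffman.surj} being what converts $\|(v,z)\|^*$ into $\|v\|^*$ in \eqref{eq.Hoffman.std}. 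The same objection hits your tightness argument: the extremal instance for Proposition~\ref{prop.Hoffman} need not have zero inequality residual, so tightness does not transfer; in the paper it follows from the tightness clause of Theorem~\ref{thm.main} with this $\L$. Your duality chain for \eqref{eq.Hoffman.std} and the surjective case \eqref{eq.Hoffman.std.surj} are fine in outline, but the error bound and tightness need the subspace-restricted theorem rather than Proposition~\ref{prop.Hoffman}.
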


Notice the analogy between Proposition~\ref{prop.Hoffman.std} and the following classical error bound for systems of linear equations. Let $A \in \R^{m\times n}$ be full row rank.  Then for all $b \in \R^m$ and $u\in \R^n$
\[
\dist(u,A^{-1}(b)) \le \|A^{-1}\| \cdot \|Au - b\|
\]
where
\[
\|A^{-1}\| = \dmax_{y\in\R^m\atop \|y\|\le 1} \min_{x\in A^{-1}(y)} \|x\|=\dmax_{v \in \R^m \atop \|A\transp v\|^*\le 1} \|v\|^* =  \frac{1}{
\dmin_{v \in \R^m \atop \|v\|^* = 1} \|A\transp v\|^*
}
\]
is the norm of the inverse mapping $A^{-1}:\R^m \rightrightarrows \R^n$ defined by $A$.

\bigskip

Next, consider the case when the equations are easy.  This case plays a central role in~\cite{xia2015}.

\begin{proposition}\label{prop.equal.easy}
Let $A \in \R^{m\times n}, \, C \in \R^{p \times n}$, and $H:=\dmax_{J\in \sJ(A;C)} H_J$ where
\begin{equation}\label{eq.equal.easy}
H_J := 
\dmax_{w\in \R^p\atop \|w\|\le 1} \dmin_{x\in \R^n \atop Ax = 0, C_Jx \le w_J} \|x\|
= \dmax_{(v,z)\in (A\R^n)\times\R^p_+ \atop z_{J^c} = 0, \|A\transp v + C\transp z\|^* \le 1} \|z\|^*
= \frac{1}{\dmin_{(v,z)\in (A\R^n)\times\R^p_+ \atop z_{J^c} = 0, \|z\|^* = 1} 
\|A\transp v + C\transp z\|^*}.
\end{equation}
Then for all $b\in \R^m, d\in \R^p$ such that $A^{-1}(b) \cap P_{C,d}\ne \emptyset$ and all $u\in A^{-1}(b)$
\[
\dist(u,A^{-1}(b) \cap P_{C,d}) \le H \cdot \dist\left(d,Cu + \R^p_+\right) \le H\cdot\|(Cu-d)_+\|.
\]
The first bound is tight: If $H>0$ then there exist $b\in \R^m, \, d\in\R^p$ such that $A^{-1}(b) \cap P_{C,d} \ne \emptyset$ and $u  \in
P_{C,d} \setminus A^{-1}(b)$ such that 
\[
\dist(u,A^{-1}(b) \cap P_{C,d}) =  H \cdot \dist\left(d,Cu + \R^p_+\right).
\]
If $[A,C,\{1,\dots,p\}] : \R^n \rightrightarrows \R^m \times \R^p$  is  surjective then 
\begin{equation}\label{eq.equal.easy.surj}
H= 
\dmax_{(v,z)\in \R^m\times \R^p_+ \atop \|A\transp v + C\transp z\|^* \le 1} \|z\|^* =
\frac{1}{\dmin_{(v,z)\in \R^m\times \R^p_+ \atop \|z\|^* = 1} \|A\transp v + C\transp z\|^*}.
\end{equation}
\end{proposition}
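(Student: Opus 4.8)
The plan is to derive Proposition~\ref{prop.equal.easy} as an instantiation of the generic characterization of the Hoffman constant $\H(\Phi\vert\L)$ from Theorem~\ref{thm.main}. Take the polyhedral sublinear mapping $\Phi:\R^n\rightrightarrows\R^m\times\R^p$ given by $\Phi(x)=\matr{Ax\\Cx}+\bigl(\{0\}\times\R^p_+\bigr)$, whose graph is a polyhedral convex cone, and the subspace $\L:=\{0\}^m\times\R^p\subseteq\R^m\times\R^p$. Then $\Phi^{-1}(b,d)=A^{-1}(b)\cap P_{C,d}$, the hypothesis $u\in A^{-1}(b)$ is exactly the requirement that the residual $\matr{b\\d}-\matr{Au\\Cu}$ be attainable within $\L$, and in that case $\dist_\L\!\bigl(\matr{b\\d},\Phi(u)\bigr)=\dist(d,Cu+\R^p_+)$. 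Consequently the error bound and its tightness asserted in the proposition follow verbatim from Theorem~\ref{thm.main}, once we check that $H=\H(\Phi\vert\L)$.

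To identify $H$ with $\H(\Phi\vert\L)$, I would match the canonical collections. Theorem~\ref{thm.main} writes $\H(\Phi\vert\L)$ as the largest of the norms of a canonical family of polyhedral sublinear submappings of $\Phi$; for $\Phi$ of the above form these are obtained by retaining the block $Ax$, retaining a subset $J\subseteq\{1,\dots,p\}$ of the rows of $C$ together with their nonnegativity, and discarding the rest, i.e.\ the submapping $x\mapsto\matr{Ax\\C_Jx}+\bigl(\{0\}\times\R^J_+\bigr)$, which is none other than $[A,C,J]$; it contributes to the family precisely when it is relatively surjective, that is, when $J\in\sJ(A;C)$. Its norm measured through residuals in $\L$ is, by definition, $\dmax_{w\in\R^p,\,\|w\|\le1}\,\dmin_{x:\,Ax=0,\,C_Jx\le w_J}\|x\|$, which is the first expression for $H_J$ in~\eqref{eq.equal.easy}. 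Hence $\H(\Phi\vert\L)=\dmax_{J\in\sJ(A;C)}H_J=H$.

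The two remaining expressions for $H_J$ follow by convex duality. Relative surjectivity of $[A,C,J]$ supplies the constraint qualification needed to dualize the inner minimization of $\|x\|$ subject to $Ax=0,\ C_Jx\le w_J$ and then to interchange the two suprema; this produces $\dmax\{\|z\|^*: v\in A\R^n,\ z\in\R^p_+,\ z_{J^c}=0,\ \|A\transp v+C\transp z\|^*\le1\}$, and positive homogeneity converts this to the reciprocal form. For the surjective case, if $[A,C,\{1,\dots,p\}]$ is surjective then $A\R^n=\R^m$ and $\{1,\dots,p\}\in\sJ(A;C)$; since every $J$ satisfies $J\subseteq\{1,\dots,p\}$ and enlarging $J$ only enlarges the feasible set of the inner minimization defining $H_J$, the maximum over $\sJ(A;C)$ is attained at $J=\{1,\dots,p\}$, where the side conditions $v\in A\R^n$ and $z_{J^c}=0$ both become vacuous, giving~\eqref{eq.equal.easy.surj}.

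The step I expect to require the most care is the matching in the second paragraph: confirming that the canonical family of submappings supplied by Theorem~\ref{thm.main} for this specific pair $(\Phi,\L)$ is indexed exactly by $\sJ(A;C)$, with the right identification of coordinates and of the restricted norms, and that the relative-surjectivity condition appearing there is the same as the one defining $\sJ(A;C)$ via the mappings $[A,C,J]$. Once that dictionary is in place, the duality computation and the surjective specialization are routine.
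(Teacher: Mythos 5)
Your proposal is correct and follows essentially the same route as the paper, which proves this proposition by instantiating Theorem~\ref{thm.main} exactly as in the proof of Proposition~\ref{prop.Hoffman} (same mapping $\Phi(x)=\matr{Ax\\Cx}+\{0\}\times\R^p_+$, tangent-cone mappings identified with $[A,C,J]$, and the dual expressions obtained from the norm-duality result in Proposition~\ref{prop.Hoffman.surj}), but with $\L=\{0\}\times\R^p$ in place of $\R^m\times\R^p$. One small slip: enlarging $J$ \emph{shrinks} the feasible set $\{x:\ Ax=0,\ C_Jx\le w_J\}$ of the inner minimization (it adds constraints), which is exactly why $H_J$ is nondecreasing in $J$ and the maximum is attained at $J=\{1,\dots,p\}$ in the surjective case --- your stated direction is backwards, although the monotonicity conclusion you draw is the correct one.
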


\bigskip

When the mapping $[A,C,\{1,\dots,p\}]$ is surjective, the quantity $1/H$
defined in each of Proposition~\ref{prop.Hoffman}, Proposition~\ref{prop.Hoffman.std}, or Proposition~\ref{prop.equal.easy} equals a certain kind of {\em block-structured distance to non-surjectivity} of $[A,C,\{1,\dots,p\}]$.  More precisely, when $[A,C,\{1,\dots,p\}]$ is surjective, the quantity $1/H$ defined by~\eqref{eq.Hoffman.surj} equals the size of 
the smallest $ (\Delta A,\Delta C)\in \R^{(m+p)\times n}$ such that $[A+\Delta A,C+\Delta C,\{1,\dots,p\}]$ is not surjective.  
Similarly, when $[A,C,\{1,\dots,p\}]$ is surjective, the quantity $1/H$ defined by~\eqref{eq.Hoffman.std.surj} equals the size of 
the smallest $\Delta A \in \R^{m\times n}$ such that $[A+\Delta A, C,\{1,\dots,p\}]$ is not surjective.  Finally, when $[A,C,\{1,\dots,p\}]$ is surjective, the quantity $1/H$ defined by~\eqref{eq.equal.easy.surj} equals the size of 
the smallest $\Delta C \in \R^{{p}\times n}$ such that $[A,C+\Delta C,\{1,\dots,p\}]$ is not surjective. Each of these block-structured distances to non-surjectivity is the same as the analogous block-structured distances to ill-posedness of the system of inequalities $Ax = 0, \, Cx < 0$.  For a more detailed discussion on the block-structure distance to non-surjectivity and the block-structure distance to ill-posedness, we refer the reader to~\cite{Lewi05,Pena00,Pena03,Pena05}.

\bigskip

Next, consider a special case when one of the equations and all inequalities are easy.  This case underlies the construction of some measures of conditioning for polytopes developed in~\cite{BeckS15,GarbH13,LacoJ15,GutmP18,PenaR16} to establish the linear convergence of some variants of the Frank-Wolfe Algorithm.   Recall some notation from~\cite{GutmP18}. Let $A\in \R^{m\times n}$ and consider the polytope
$
\conv(A) := A\Delta_{n-1},
$
where $\Delta_{n-1} := \{x\in \R^n_+: \|x\|_1 = 1\}$.
Observe that $v \in \conv(A)$ if and only if the following system of constraints has a solution
\begin{equation}\label{eq.polytope}
 Ax = v, \; x \in \Delta_{n-1}.
 \end{equation}
It is natural to consider $x \in \Delta_{n-1} 
$ 
in~\eqref{eq.polytope} as an {\em easy-to-satisfy} constraint.  
Following the notation in~\cite{GutmP18}, for $v\in \conv(A)$ let \[Z(v) := \{z\in \Delta_{n-1}: Az = v\}.\]  The Hoffman constant $H$ in Proposition~\ref{prop.facial.dist} below plays a central role in~\cite{GutmP18,LacoJ15,PenaR16}.  In particular, when $\R^n$ is endowed with the $\ell_1$ norm, $1/H$ is the same as the {\em facial distance} or {\em pyramidal width} of the polytope $\conv(A)$ as detailed in~\cite{GutmP18,PenaR16}.  We will rely on the following notation.  For $A\in \R^{m\times n}$ let $L_A:=\{Ax: \1\transp x = 0\}$ and  for $J\subseteq \{1,\dots,n\}$  let $K_J:= \{x\in \R^n: \1\transp x = 0, \; x_J \ge 0\}$.  Let $\Pi_{L_A}:\R^m \rightarrow L_A$ denote the orthogonal projection onto $L_A$.

\begin{proposition}\label{prop.facial.dist} Let $A\in \R^{m\times n}$.  Let $\tilde A := \matr{A \\ \1\transp} \in \R^{(m+1)\times n},\; C:=-I_n \in \R^{n\times n},$ and $H:=\dmax_{J\in \sJ(\tilde A;C)} H_J$ where
\begin{equation}\label{eq.facial.dist}
H_J:= 
 \max_{y \in A K_J\atop \|y\| \le 1} \dmin_{x\in K_J \atop Ax = y} \|x\| = \max_{(v,t) \in \tilde A\R^n,z\in\R^n_+ \atop z_{J^c} = 0, \|A\transp v + t\1- z\|^* \le 1} \|\Pi_{L_A}(v)\|^* = \frac{1}{\dmin_{(v,t) \in \tilde A\R^n, z\in  \R^n_+ \atop z_{J^c} = 0, \|\Pi_{L_A}(v)\|^* = 1} \|A\transp v + t\1 - z\|^*}.
\end{equation}
Then for all $x \in \Delta_{n-1}$ and $v\in \conv(A)$ 
\[
\dist(x,Z(v)) \le H \cdot \|Ax - v\|.
\]
Furthermore, this bound is tight: If $H>0$ then there exist $v\in \conv(A)$ and $x\in \Delta_{n-1}\setminus Z(v)$ such that
\[
\dist(x,Z(v)) = H \cdot \|Ax - v\| > 0.
\]
\end{proposition}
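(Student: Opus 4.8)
The plan is to recognize Proposition~\ref{prop.facial.dist} as a special case of Proposition~\ref{prop.equal.easy}, applied to a carefully chosen system of equations and inequalities, and then to verify that the concrete formulas in~\eqref{eq.facial.dist} are exactly the specializations of~\eqref{eq.equal.easy}. First I would set up the correspondence: in Proposition~\ref{prop.equal.easy} take the equality block to be $\tilde A = \matr{A \\ \1\transp}$ and the inequality block to be $C = -I_n$, so that $\tilde A x = \matr{v \\ 1}$ together with $Cx \le 0$ says exactly $Ax = v$, $\1\transp x = 1$, $x \ge 0$, i.e. $x \in \Delta_{n-1}$ and $Ax = v$. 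Thus the nonempty polyhedron $\tilde A^{-1}(b) \cap P_{C,d}$ with $b = \matr{v\\1}$, $d = 0$ is precisely $Z(v)$, and the conclusion of Proposition~\ref{prop.equal.easy} --- with $u \in \tilde A^{-1}(b)$ playing the role of a point with $\tilde A x = \matr{v\\1}$, i.e. $x \in \Delta_{n-1}$ --- reads $\dist(x, Z(v)) \le H \cdot \dist(0, Cx + \R^n_+) = H\cdot \|(Cx)_+\| = H\cdot\|(-x)_+\| = 0$ when $x \ge 0$; but we want the residual in $\|Ax - v\|$, not in the inequalities. So the correct assignment is the \emph{reverse}: I would instead apply Proposition~\ref{prop.Hoffman.std} or rather keep the equations $\1\transp x = 1$ and $x \ge 0$ as the ``easy'' part and measure the residual only in $Ax = v$. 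Concretely, the cleanest route is to absorb $\1\transp x = 1$, $x\ge0$ into the ambient easy structure exactly as in the discussion preceding the proposition, and invoke the general Theorem~\ref{thm.main} with $\Phi$ the sublinear mapping $x \mapsto \matr{Ax \\ \1\transp x} + \{0\}\times\R + \{(0,s): s\text{ with }s_J\ge 0 \text{ in the }C\text{-block}\}$ and $\L = (A\R^n)\times\{0\}$, so that only the $A$-block of the residual is penalized.

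The second step is to translate the abstract formula $\H(\Phi\vert\L) = \max_{J} \|\cdot\|$ from Theorem~\ref{thm.main} into the three displayed expressions in~\eqref{eq.facial.dist}. The primal expression $\max_{y\in AK_J,\|y\|\le1}\min_{x\in K_J, Ax=y}\|x\|$ is the norm of the restriction of the relevant sublinear mapping; here $K_J = \{x: \1\transp x = 0, x_J \ge 0\}$ arises as the recession-type cone obtained after the homogenization that strips the inhomogeneous constraint $\1\transp x = 1$ down to $\1\transp x = 0$. The dual expression is obtained by the standard Lagrangian/LP-duality computation that already appears in~\eqref{eq.equal.easy}: the dual variables are $(v,t)$ for the equality block $\tilde A x = 0$ (so $v$ is dual to $Ax$ and $t$ dual to $\1\transp x$) and $z \in \R^n_+$ with $z_{J^c}=0$ for the inequalities $-x\le 0$; the normalization $\|z\|^* = 1$ from~\eqref{eq.equal.easy} gets replaced by the $\L$-restricted normalization, which because $\L = (A\R^n)\times\{0\}$ means we measure only the $A\R^n$-component of $v$, and that component is $\Pi_{L_A}(v)$ --- this is exactly why $\|\Pi_{L_A}(v)\|^*$ appears in place of $\|(v,z)\|^*$ or $\|z\|^*$. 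The third expression is the reciprocal form, identical bookkeeping.

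The third step is to check tightness, which is inherited verbatim: Theorem~\ref{thm.main} (and hence Proposition~\ref{prop.equal.easy}) asserts the first bound is tight, so there exist data achieving equality, and one unwinds the homogenization to produce the claimed $v\in\conv(A)$ and $x\in\Delta_{n-1}\setminus Z(v)$ with $\dist(x,Z(v)) = H\cdot\|Ax-v\| > 0$. I would also record the sanity check that $\sJ(\tilde A; C)$ is the right index collection: $J \in \sJ(\tilde A;C)$ iff $[\tilde A, C, J]$ is relatively surjective, which here says $\tilde A \R^n + \{0\}\times\{s: s_J\ge 0\}$ is a subspace --- equivalently, after projecting out, that $A K_J$ is a subspace, matching the primal formula's domain.

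The main obstacle I anticipate is \emph{getting the homogenization and the choice of $\L$ exactly right} so that the residual penalty lands on $\|Ax-v\|$ alone (not on the simplex constraints) while the ``direction'' cone correctly becomes $K_J$ with the hyperplane $\1\transp x = 0$ rather than $\1\transp x = 1$; this is the step where an off-by-one in which block is ``easy'' versus ``penalized'' would silently give the wrong formula. A secondary technical point is justifying that the $v$-component of the dual variable can be taken in $A\R^n$ with its norm replaced by $\|\Pi_{L_A}(v)\|^*$ --- i.e. that passing from the full dual norm on $(v,t)$ to the seminorm $\|\Pi_{L_A}(v)\|^*$ does not change the optimum. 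This follows because adding any element of $L_A^\perp$ to $v$ (and adjusting $t$) leaves $A\transp v + t\1$ unchanged modulo the span of $\1$, which is already absorbed by the free variable $t$; but spelling this out carefully is the part that needs genuine (if routine) care rather than a one-line citation.
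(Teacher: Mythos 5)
Your plan is essentially the paper's proof: apply Theorem~\ref{thm.main} to the polyhedral sublinear mapping $\Phi(x)=\matr{\tilde A x\\ -x}+\{0\}\times\R^n_+$ with codomain $\R^m\times\R\times\R^n$ and $\L=\R^m\times\{0\}\times\{0\}$, and then read off the dual expressions in~\eqref{eq.facial.dist} from Proposition~\ref{prop.Hoffman.surj}, which is exactly what Section~\ref{sec.proof.Hoffman} does. The only correction needed is in your displayed definition of $\Phi$: the ``$+\{0\}\times\R$'' term and the $J$-dependent slack do not belong there (the former would erase the constraint $\1\transp x=1$ from $\Phi^{-1}(b)$, and the latter conflates $\Phi$ with its tangent mappings $\Phi_{T_J}$); with the correct $\Phi$, the cone $K_J$ is not produced by a separate homogenization step but by intersecting $\Image(\Phi_{T_J})$ with $\L$, which forces $\1\transp x=0$ and $x=s$ with $s_J\ge 0$.
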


 \medskip

The following analogue of Corollary~\ref{corol.sets} and Corollary~\ref{corol.sets.rest} also holds.

\begin{corollary}\label{corol.sets.gral} Let $A \in \R^{m\times n},\, C\in \R^{p\times n}$.  Suppose $\F\subseteq \sJ(A;C)$ and $\I \subseteq 2^{\{1,\dots,p\}}\setminus \sJ(A;C)$  are such that for all $J \subseteq \{1,\dots,p\}$ either  $J\subseteq F$ for some $F \in \F,$
or $I\subseteq J$ for some $I \in \I.$   Then the expression 
$H:=\max_{J\in \sJ(A;C)} H_J$ in each of Proposition~\ref{prop.Hoffman}, Proposition~\ref{prop.Hoffman.std}, and Proposition~\ref{prop.equal.easy} can be replaced with $H=\max_{F\in \F} H_F$.  The same holds for Proposition~\ref{prop.facial.dist} with $\tilde A = \matr{A \\ \1\transp}$ in lieu of $A$.
\end{corollary}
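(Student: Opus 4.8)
The plan is to follow the strategy behind the proofs of Corollary~\ref{corol.sets} and Corollary~\ref{corol.sets.rest}: I will show that $\dmax_{J\in\sJ(A;C)}H_J=\dmax_{F\in\F}H_F$, after which each error bound displayed in Proposition~\ref{prop.Hoffman}, Proposition~\ref{prop.Hoffman.std}, Proposition~\ref{prop.equal.easy}, and Proposition~\ref{prop.facial.dist} holds verbatim with the right-hand expression substituted for $H$. This identity reduces to two monotonicity facts: (i) $\sJ(A;C)$ is closed under taking subsets, that is, $J\subseteq F$ and $F\in\sJ(A;C)$ imply $J\in\sJ(A;C)$; and (ii) $J\mapsto H_J$ is nondecreasing under inclusion within $\sJ(A;C)$, that is, $H_J\le H_F$ whenever $J\subseteq F$ with $J,F\in\sJ(A;C)$.

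For (i) I would first record that, for any $J\subseteq\{1,\dots,p\}$, the mapping $[A,C,J]$ is relatively surjective if and only if $[A,C,J](\R^n)=(A\R^n)\times\R^p$. The ``if'' direction is trivial; for ``only if'', a linear subspace containing both $\{(Ax,Cx):x\in\R^n\}$ and $\{0\}\times\{s\in\R^p:s_J\ge 0\}$ must contain $\{(Ax,Cx):x\in\R^n\}+(\{0\}\times\R^p)=(A\R^n)\times\R^p$, since $\lspan\{s\in\R^p:s_J\ge0\}=\R^p$, while the reverse inclusion is immediate. Property (i) then follows at once: if $J\subseteq F$ then $\{s:s_J\ge 0\}\supseteq\{s:s_F\ge 0\}$, hence $(A\R^n)\times\R^p=[A,C,F](\R^n)\subseteq[A,C,J](\R^n)\subseteq(A\R^n)\times\R^p$, so $[A,C,J](\R^n)=(A\R^n)\times\R^p$. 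Equivalently, if $I\subseteq J$ and $I\notin\sJ(A;C)$ then $J\notin\sJ(A;C)$.

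Using (i), the hypothesis on $\F$ and $\I$ shows that every $J\in\sJ(A;C)$ satisfies $J\subseteq F$ for some $F\in\F$: if not, the covering assumption supplies some $I\in\I\subseteq 2^{\{1,\dots,p\}}\setminus\sJ(A;C)$ with $I\subseteq J$, and (i) then contradicts $J\in\sJ(A;C)$. For (ii), I would read the inequality $H_J\le H_F$ off the first (minimization) expression for $H_J$ in each of the four propositions (taking $\tilde A=\matr{A \\ \1\transp}$ and $C=-I_n$ for Proposition~\ref{prop.facial.dist}): passing from $J$ to $F$ only appends inequalities to the inner minimization, so its feasible set shrinks and its optimal value cannot decrease; and, since both $[A,C,J]$ and $[A,C,F]$ are relatively surjective, the outer maximization ranges over one and the same set for $J$ and for $F$ (namely $(A\R^n)\times\R^p$, $A\R^n$, $\R^p$, or $L_A$ according to the proposition), on which the inner minima are finite, so the pointwise comparison survives the outer maximum. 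Putting these together: for each $J\in\sJ(A;C)$ there is $F\in\F$ with $H_J\le H_F\le\dmax_{F'\in\F}H_{F'}$, while $\F\subseteq\sJ(A;C)$ gives the reverse inequality, so $\dmax_{J\in\sJ(A;C)}H_J=\dmax_{F\in\F}H_F$. The assertion about Proposition~\ref{prop.facial.dist} is precisely this argument run with $\tilde A=\matr{A \\ \1\transp}$ and $C=-I_n$.

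I expect the only genuine friction to lie in step (ii) for Proposition~\ref{prop.Hoffman.std} and Proposition~\ref{prop.facial.dist}, where the domain of the outer maximization is written in a form that formally depends on $J$, namely $\{Ax:C_Jx\le 0\}$ and $AK_J$. One has to check that relative surjectivity of $[A,C,J]$ collapses these to the $J$-independent subspaces $A\R^n$ and $L_A$; this is immediate from the reformulation in (i) once one observes $\{Ax:C_Jx\le 0\}=\{y:(y,0)\in[A,C,J](\R^n)\}$ and the analogous identity for $AK_J$. With that normalization in place the feasible-set monotonicity argument goes through unchanged, and the remainder of the proof is a transcription of the proof of Corollary~\ref{corol.sets}.
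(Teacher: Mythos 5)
Your proposal is correct and follows essentially the same route the paper takes for Corollary~\ref{corol.sets} (and leaves implicit here): use subset-closedness of $\sJ(A;C)$ to show every $J\in\sJ(A;C)$ is contained in some $F\in\F$, then the monotonicity $H_J\le H_F$ under inclusion to conclude $\dmax_{J\in \sJ(A;C)}H_J=\dmax_{F\in\F}H_F$. Your additional checks — that relative surjectivity forces $[A,C,J](\R^n)=(A\R^n)\times\R^p$, and that this collapses the $J$-dependent outer domains $\{Ax:C_Jx\le 0\}$ and $AK_J$ to $A\R^n$ and $L_A$ — are exactly the details the paper omits, and they are argued correctly.
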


\section{An algorithm to compute the Hoffman constant}
\label{sec.algo}

We next describe an algorithm to compute the Hoffman constant of a systems of linear equations and inequalities.  We first describe the computation of the Hoffman constant $H(A)$ in Proposition~\ref{prop.Hoffman.gral}.  We subsequently describe the computation of the Hoffman constant $H(A;C) := H$ 
defined in Proposition~\ref{prop.Hoffman}.  

The algorithms described below have straightforward extensions to the more general case when some equations or inequalities are easy to satisfy.

\subsection{Computation of $H(A)$}
\label{sec.algo.ineq}
Let $A\in \R^{m\times n}.$  Corollary~\ref{corol.sets} suggests the following algorithmic approach to compute $H(A)$:  Find collections of sets $\F \subseteq \sJ(A)$ and $\I \subseteq 2^{\{1,\dots,m\}} \setminus \sJ(A)$ that  provide joint certificates of surjectivity and non-surjectivity for all subsets of $\{1,\dots,m\}$ and then compute $H(A) = \max_{F\in \F} H_F(A)$.   A naive way to construct $\F$ and $\I$ would be to scan the subsets of $\{1,\dots,m\}$ in monotonically decreasing order as follows.   Starting with $J = \{1,\dots,m\}$, check whether $J$ is surjective.
If $J$ is surjective, then place $J$ in $\F$.  Otherwise, place $J$ in $\I$ and continue by scanning each $J\setminus\{i\}$ for $i\in J$. 
Algorithm~\ref{alg:bb} and its variant, Algorithm~\ref{alg:bb.v2}, refine the above naive approach to construct $\F,\I$ 
more efficiently.  We next describe both algorithms.

The central idea of Algorithm~\ref{alg:bb} is to maintain three collections $\F,\I,\J \subseteq 2^{\{1,\dots,m\}}$ such that the following invariant holds at the beginning of each main iteration (Step 3 in Algorithm~\ref{alg:bb}):

\begin{quote}
The collections $\F \subseteq \sJ(A)$ and $\I \subseteq 2^{\{1,\dots,m\}}\setminus \sJ(A)$  provide joint certificates of surjectivity and non-surjectivity for all subsets of $\{1,\dots,m\}$ except possibly those included in some subset in the collection $\J$.
\end{quote}

This invariant evidently holds for $\F = \I = \emptyset$ and $\J=\{\{1,\dots,m\}\}.$  At each main iteration, Algorithm~\ref{alg:bb} scans a set $J\in \J$ to either detect that $J$ is $A$-surjective or find a certificate of non-surjectivity $I\subseteq J$.  If $J$ is $A$-surjective then the above invariant continues to hold after adding $J$ 
 to $\F$ and removing all $\tilde J\in \J$ such that $\tilde J \subseteq J$.  On the other hand, if $I$ is a certificate of  non-surjectivity for $J$, then the invariant continues to hold if $I$ is added to $\I$ and $\J$ is updated as follows. Replace each $\hat J\in \J$ that contains $I$ with the sets $\hat J\setminus \{i\}, \; i\in I$ that are not included in any set in $\F$.  Algorithm~\ref{alg:bb} terminates when $\J$ is empty.  This must happen eventually since at each main iteration the algorithm either removes at least one subset from $\J$ or removes at least one subset from $\J$ and replaces it by proper subsets of it.  

\medskip

The most time-consuming operation in Algorithm~\ref{alg:bb} (Step 4) is the step that detects whether a subset $J\in \J$ is $A$-surjective or finds a certificate of nonsurjectivity $I\subseteq J$.  This step requires solving the following problem
\begin{equation}\label{eq.lp}
\min\{\|A_J\transp v\|^*: v\in \R^J_+, \|v\|^* = 1\}.
\end{equation}
Observe that $J$ is $A$-surjective if and only if the optimal value of~\eqref{eq.lp} is positive. More precisely,
by Proposition~\ref{prop.Hoffman.A.surj}, the minimization problem~\eqref{eq.lp} either detects that $J$ is $A$-surjective and computes 
$1/H_J(A)$ when its optimal value is positive, or detects that $J$ is not $A$-surjective and finds $v\in \R^J_+\setminus\{0\}$ such that $A_J\transp v = 0$.  In the latter case, the set $I(v):=\{i\in J: v_i > 0\}$ is a certificate of non-surjectivity for $J$.  When $J$ is not $A$-surjective, the certificate of non-surjectivity $I(v)\subseteq J$ obtained from~\eqref{eq.lp} is typically smaller than $J$.

The tractability of problem~\eqref{eq.lp} depends on the norms in $\R^n$ and $\R^m$.  In particular,
when~$\R^m$ is endowed with the $\ell_\infty$-norm we have $\|v\|^* = \|v\|_1 = \1\transp v$ for $v\in \R^J_+$ and thus~\eqref{eq.lp} becomes the following convex optimization problem 
\[
\min\{\|A_J\transp v\|^*: v\in \R^J_+, \1\transp v = 1\}.
\]
Furthermore,~\eqref{eq.lp} is a linear program if both $\R^m$ and $\R^n$ are endowed with the $\ell_\infty$-norm or if~$\R^m$ is endowed with the $\ell_\infty$-norm and $\R^n$ is endowed with the $\ell_1$-norm.  Problem~\eqref{eq.lp} is a second-order conic program if $\R^m$ is endowed with the $\ell_\infty$-norm and $\R^n$ is endowed with the $\ell_2$-norm.  In our MATLAB prototype implementation described below,  $\R^n$ and $\R^m$  
are endowed with the~$\ell_\infty$ norm and~\eqref{eq.lp} is solved via linear programming.  

Problem~\eqref{eq.lp} can also be solved by solving $|J|$ convex optimization problems when $\R^m$ is endowed with the $\ell_1$-norm.  This is suggested by the characterizations of Renegar's distance to ill-posedness in~\cite{FreuV99a,FreuV03}.  When $\R^m$ is endowed with the $\ell_1$-norm we have $\|v\|^* = \|v\|_\infty = \dmax_{j\in J} v_j$ for $v\in \R^J_+$ and thus
\[
\min\{\|A_J\transp v\|^*: v\in \R^J_+, \|v\|^* = 1\} = \dmin_{j\in J} \;  \min\{\|A_J\transp v\|^*: v\in \R^J_+, v \le \1, \, v_j = 1\}.
\] 
Section~\ref{sec.euclidean} below describes a more involved approach to estimate~\eqref{eq.lp} when both $\R^n$ and $\R^m$  
are endowed with the $\ell_2$ norm.

We should note that although the specific value of the Hoffman constant $H(A)$ evidently depends on the norms in $\R^n$ and $\R^m$, the $A$-surjectivity of a subset $J\subseteq\{1,\dots,m\}$ does not.  In particular, the collections $\F,\I$ found in  Algorithm~\ref{alg:bb} could be used to compute or estimate $H(A)$ for any arbitrary norms provided each $H_F(A)$ can be computed or estimated when $F\subseteq\{1,\dots,m\}$ is $A$-surjective.

A potential drawback of Algorithm~\ref{alg:bb} is the size of the collection $\J$ that could become potentially large even if the sets $\F,\I$ do not.  This drawback suggests an alternate approach.  Given $\F\subseteq \sJ(A)$ and $\I \subseteq 2^{\{1,\dots,m\}}\setminus \sJ(A)$ consider the feasibility problem
\begin{equation}\label{eq.ip}
\begin{array}{rl}
& |J^c\cap I| \ge 1, \; I\in \I\\
& |J\cap F^c| \ge 1, \; F\in \F\\
& J \subseteq \{1,\dots,m\}.
\end{array}
\end{equation}
Observe that $\F,\I$ jointly provide certificates of surjectivity or non-surjectivity for all subsets of $\{1,\dots,m\}$ if and only if~\eqref{eq.ip} is infeasible.  This suggests the variant of Algorithm~\ref{alg:bb} described in Algorithm~\ref{alg:bb.v2}.  The main difference is that  Algorithm~\ref{alg:bb.v2} does not maintan $\J$ and instead relies on~\eqref{eq.ip} at each main iteration.  Algorithm~\ref{alg:bb.v2} trades off the memory cost of maintaining $\J$ for the computational cost of solving the feasibility problem~\eqref{eq.ip} at each main iteration.

\begin{algorithm}
  \caption{
   Computation of collections of certificates $\F ,\;\I$ and constant $H(A)$}
\label{alg:bb}
\begin{algorithmic}[1]
\State {\bf input} $A \in \R^{m \times n}$
\State Let $\F := \emptyset, \;\I := \emptyset, \; \J:=\{\{1,\dots,m\}\}, H(A):= 0$
\While {$\mathcal J \ne \emptyset$}
	\State Pick $J \in \J$ 
	 and let $v$ solve~\eqref{eq.lp} to detect whether $J$ is $A$-surjective
		\If {$\|A_J\transp v\|^* > 0$}
			\State 
			$\F := \F \cup \{J\}$, $\hat \J := \{\hat J \in \J: \hat J \subseteq J\},$  and  $H(A) := \max\left\{H(A),\frac{1}{\|A_J\transp v\|^*}\right\}$
			\State Let $\J:=\J\setminus\hat \J$ 
		\Else
			\State 
			 Let $\I:= \I \cup\{I(v)\}$, $\hat \J := \left\{\hat J\in \J: I(v) \subseteq \hat J\right\}$
			 \State Let $\bar \J := \left\{\hat J\setminus \{i\}: \hat J \in \hat \J, i\in I(v), \hat J\setminus \{i\} \not \subseteq F \text{ for all } F\in \F\right\}$ 
			 \State Let $\J:= (\J \setminus \hat \J) \cup \bar \J$		\EndIf
		\EndWhile
\State \Return $\F, \, \I, \, H(A)$
\end{algorithmic}
\end{algorithm}

\begin{algorithm}
  \caption{
   Computation of collections of certificates $\F ,\;\I$ and constant $H(A)$ version 2}
\label{alg:bb.v2}
\begin{algorithmic}[1]
\State {\bf input} $A \in \R^{m \times n}$
\State Let $\F := \emptyset, \;\I := \emptyset, H(A):= 0$
\While {\eqref{eq.ip} is feasible}
	\State Let $J\subseteq \{1,\dots,m\}$ solve  \eqref{eq.ip} and let $v$ solve~\eqref{eq.lp} to detect whether $J$ is $A$-surjective
		\If {$\|A_J\transp v\|^* > 0$}
			\State 
			$\F := \F \cup \{J\}$ and  $H(A) := \max\left\{H(A),\frac{1}{\|A_J\transp v\|^*}\right\}$
		\Else
			\State 
			 Let $\I:= \I \cup\{I(v)\}$	
			 \EndIf
		\EndWhile
\State \Return $\F, \, \I, \, H(A)$
\end{algorithmic}
\end{algorithm}

We tested prototype MATLAB implementations of Algorithm~\ref{alg:bb} and 
Algorithm~\ref{alg:bb.v2} on collections of randomly generated matrices $A$ of various sizes (with $m >n$, where the analysis is interesting).  The entries in each matrix were drawn from independent standard normal distributions.  Figure~\ref{the.figure} summarizes our results.  It displays boxplots for the sizes of the sets $\F, \I$ at termination for the {\em non-surjective} instances in the sample, that is, the matrices~$A$ such that $0\in \conv(A\transp)$.  We excluded the {\em surjective} instances, that is, the ones with $0\not\in \conv(A\transp)$ because for those instances the collections $\F = \{\{1,\dots,m\}\}$ and $\I = \emptyset$ provide certificates of surjectivity for all subsets of $\{1,\dots,m\}$ and are  identified at the first iteration of the algorithm when $J=\{1,\dots,m\}$ is scanned.  Thus the non-surjective instances are the interesting ones.  
As a reality check to our implementation of both algorithms, for every instance that we tested, we used~\eqref{eq.ip} to verify that the final sets $\F, \I$ indeed provide certificates of surjectivity and non-surjectivity for all subsets of $\{1,\dots,m\}.$

\begin{figure}[!htb]
\begin{center}
\includegraphics[width=\textwidth]{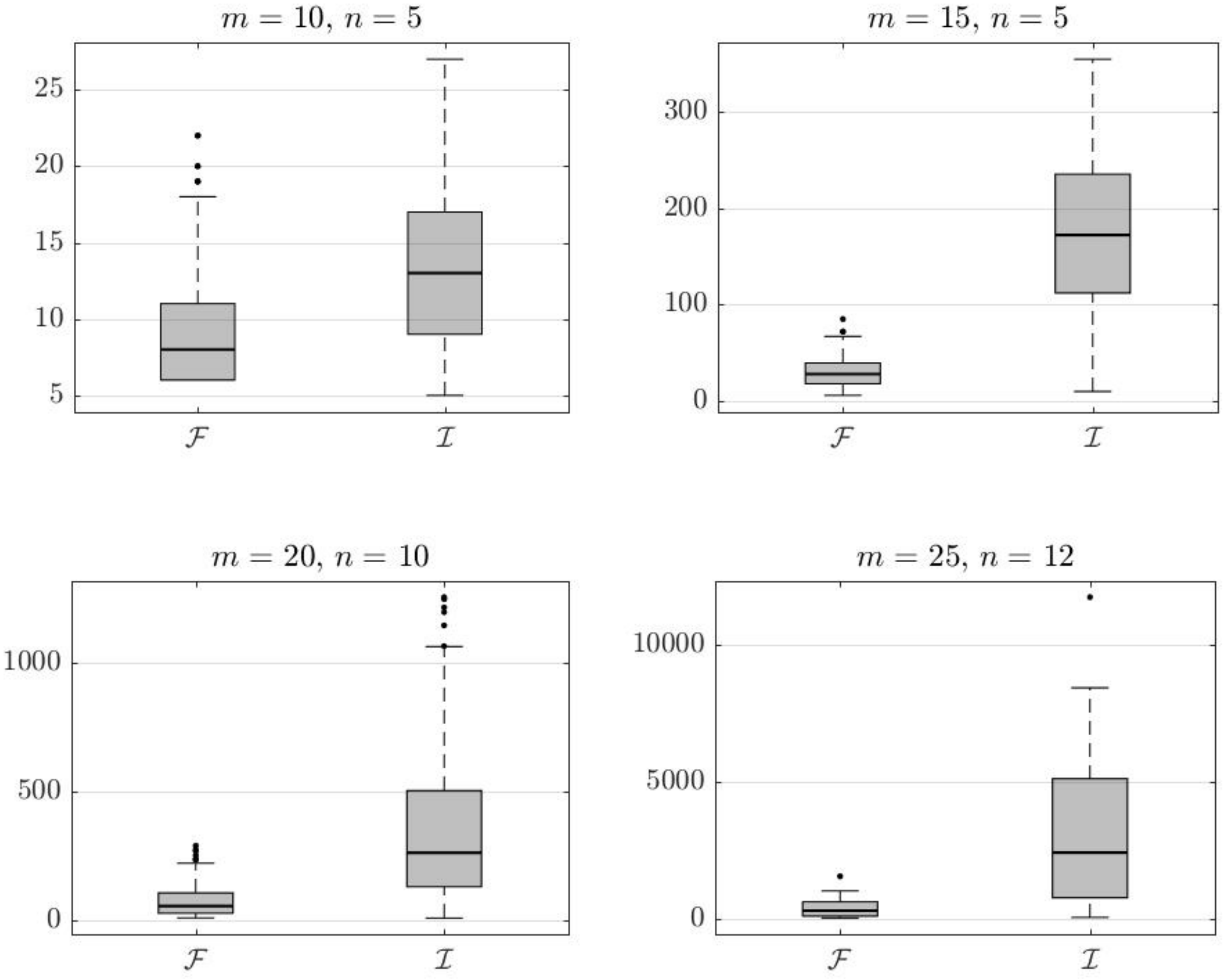}
\end{center}
\caption{Box plots of the distributions of the sizes of the sets $\I$ and $\F$ for the non-surjective instances obtained after randomly sampling 1000 matrices with $m$ rows and $n$ columns.}
\label{the.figure}
\end{figure}

The MATLAB code and scripts used for our experiments are publicly available in the following website

\begin{center} {\tt http://www.andrew.cmu.edu/user/jfp/hoffman.html} \end{center} 

The reader can readily use these files to replicate numerical results similar to those summarized in Figure~\ref{the.figure}.

 It is interesting to note that
 the size of the collection $\F$ in our experiments does not grow too rapidly.  This is reassuring in light of the characterization $H(A) = \max_{F\in \F} H_F(A)$.   Our prototype implementations are fairly basic.  In particular, our prototype implementation of Algorithm~\ref{alg:bb} maintains an explicit representation of the collections $\F, \I, \J$.  Our prototype implementation of Algorithm~\ref{alg:bb.v2} solves~\eqref{eq.ip} via integer programming.  Neither of them use warm-starts.  
It is evident that the collections $\F, \I, \J$ as well as the feasibility problem~\eqref{eq.ip} could all be handled more efficiently via more elaborate combinatorial structures such as binary decision diagrams~\cite{Aker78,BergCVHH16}.  A clever use of warm-starts would likely boost efficiency since the algorithms need to solve many similar linear and integer programs.

The results of the prototype implementations of Algorithm~\ref{alg:bb} and Algorithm~\ref{alg:bb.v2} are encouraging and suggest that  more sophisticated implementations could compute the Hoffman constant $H(A)$ for much larger matrices. 

\subsection{Computation of $H(A;C)$}
Throughout this subsection we let $H(A;C)$ denote the Hoffman constant $H$ defined in Proposition~\ref{prop.Hoffman}.
Let $A\in \R^{m\times n}$ and $C\in \R^{p\times n}$.  In parallel to the  observation in Section~\ref{sec.algo.ineq} above, Corollary~\ref{corol.sets.gral} suggests the following approach to compute $H(A;C)$:  Find collections of sets $\F \subseteq \sJ(A;C)$ and $\I \subseteq 2^{\{1,\dots,p\}} \setminus \sJ(A;C)$ such that for all $J\subseteq \{1,\dots,p\}$ either  $J\subseteq F$ for some $F \in \F,$
or $I\subseteq J$ for some $I \in \I$.  Then compute $H(A;C):=\dmax_{J\in \F} H_J(A;C)$ where 
\[
H_J(A;C) =  \frac{1}{\dmin_{v\in A\R^n, z\in \R^J_+ \atop \|(v,z)\|^* = 1} 
\|A\transp v + C_J\transp z\|^*}.
\] 
Algorithm~\ref{alg:bb} has the straightforward extension described in Algorithm~\ref{alg:bb.gral} to find $\F \subseteq \sJ(A;C)$ and $\I \subseteq 2^{\{1,\dots,p\}} \setminus \sJ(A;C)$ as above.  Algorithm~\ref{alg:bb.v2} has a similar straightforward extension.  
The most time-consuming operation in Algorithm~\ref{alg:bb.gral} (Step 4) is the step that detects whether a subset $J\in \J$ satisfies $J\in \sJ(A;C)$ or finds a certificate of non-relative-surjectivity, that is, a set $I\in 2^{\{1,\dots,p\}}\setminus \sJ(A;C)$ such that $I\subseteq J$.  This step requires solving the following problem
\begin{equation}\label{eq.lp.gral}
\min\{\|A\transp v + C_J\transp z\|^*: v\in A\R^n, z\in \R^J_+, \|(v,z)\|^* = 1\}.
\end{equation}
Observe that $J\in \sJ(A;C)$ if and only if the optimal value of~\eqref{eq.lp.gral} is positive. Thus, the minimization problem~\eqref{eq.lp.gral} either detects that $J\in \sJ(A;C)$ and computes 
$1/H_J(A;C)$ when its optimal value is positive, or detects that $J\not \in \sJ(A;C)$ and finds $z\in \R^J_+\setminus\{0\}$ such that $A\transp v + C_J\transp z = 0$.  In the latter case, the set $I(z):=\{i\in J: z_i > 0\}$ is a certificate of non--relative-surjectivity for $J$.

The tractability of~\eqref{eq.lp.gral} is a bit more nuanced than that of~\eqref{eq.lp} due to the presence of the unconstrained variables $v\in \R^m$.  The following easier problem allows us to determine whether the optimal value of~\eqref{eq.lp.gral}  is positive, that is, whether $J\in \sJ(A;C)$.  This is the most critical information about~\eqref{eq.lp.gral}  used in Algorithm~\ref{alg:bb.gral}
\begin{equation}\label{eq.lp.gral.easier}
\min\{\|A\transp v + C_J\transp z\|^*: v\in A\R^n, z\in \R^J_+, \|z\|^* = 1\}.
\end{equation}
Problem~\eqref{eq.lp.gral.easier} is a convex optimization problem when $\R^{p}$ is endowed with the $\ell_\infty$ norm.  
It is evident that the optimal value of~\eqref{eq.lp.gral} is zero if and only if the optimal value of~\eqref{eq.lp.gral.easier} is zero.  
Thus for the purpose of solving the main computational challenge in computing $H(A;C)$, that is, finding the collections $\F$ and $\I$, Algorithm~\ref{alg:bb.gral} can rely on the easier problem~\eqref{eq.lp.gral.easier} in place of~\eqref{eq.lp.gral}.  Nonetheless,~\eqref{eq.lp.gral} needs to be solved or estimated for the purpose of computing or estimating the value $H(A;C)$.   

\medskip

When $\R^{m+p}$ is endowed with the $\ell_1$-norm,~\eqref{eq.lp.gral} can be solved by solving $2m+|J|$ convex optimization problems.   In this case $\|(v,z)\|^* = \|(v,z)\|_\infty = \max\left(\max_{i=1,\dots,m} |v_i|,\max_{j\in J}|z_j|\right)$ and so
\begin{align*}
\min &\{\|A\transp v + C_J\transp z\|^*: v\in \R^m, z\in \R^J_+, \|(v,z)\|^* 
= 1\} \\ &
= \min\left(\begin{array}{l} 
\dmin_{i =1,\dots,m} \;  \min\{\|A\transp v + C_J\transp z\|^*: v \in A\R^n, z\in \R^J_+,  \|(v,z)\|_\infty \le 1, \, v_i = 1\}, \\
\dmin_{i =1,\dots,m} \;  \min\{\|A\transp v + C_J\transp z\|^*: v \in A\R^n, z\in \R^J_+,  \|(v,z)\|_\infty \le 1, \, v_i = -1\},\\
\dmin_{j\in J} \;  \min\{\|A\transp v + C_J\transp z\|^*: v \in A\R^n, z\in \R^J_+, \|(v,z)\|_\infty\le 1, \, z_j = 1\}
\end{array}
\right).
\end{align*}

Section~\ref{sec.euclidean} describes a more involved approach to estimate the optimal value of~\eqref{eq.lp.gral} when  $\R^n$ and $\R^m$ are endowed with the $\ell_2$ norm.

\begin{algorithm}
  \caption{
   Computation of collections of certificates $\F ,\;\I$ and constant $H(A;C)$ }
\label{alg:bb.gral}
\begin{algorithmic}[1]
\State {\bf input} $A \in \R^{m \times n}, \, C\in \R^{p\times n}$
\State Let $\F := \emptyset, \;\I := \emptyset, \; \J:=\{\{1,\dots,p\}\}, H(A;C):= 0$
\While {$\mathcal J \ne \emptyset$}
	\State Pick $J \in \J$ 
	 and let $(v,z)$ solve~\eqref{eq.lp.gral} to detect whether $J\in \sJ(A;C)$
		\If {$\|A\transp v + C_J\transp z\|^* > 0$}
			\State 
			$\F := \F \cup \{J\},\hat \J := \{\hat J \in \J: \hat J \subseteq J\},H(A;C) := \max\left\{H(A;C),\frac{1}{\|A\transp v+C_J\transp z\|^*}\right\}$
			\State Let $\J:=\J\setminus\hat \J$ 
		\Else
			\State 
			 Let $\I:= \I \cup\{I(v)\}$, $\hat \J := \left\{\hat J\in \J: I(z) \subseteq \hat J\right\}$
			 \State Let $\bar \J := \left\{\hat J\setminus \{i\}: \hat J \in \hat \J, i\in I(v), \hat J\setminus \{i\} \not \subseteq F \text{ for all } F\in \F\right\}$ 
			 \State Let $\J:= (\J \setminus \hat \J) \cup \bar \J$		\EndIf
		\EndWhile
\State \Return $\F, \, \I, \, H(A;C)$
\end{algorithmic}
\end{algorithm}

\subsection{Estimating~\eqref{eq.lp.gral} for Euclidean norms}
\label{sec.euclidean}

Throughout this subsection suppose that $\R^n$ and $\R^{m+p}$ are endowed with the $\ell_2$ norm and $J\subseteq\{1,\dots,p\}$ is fixed.  We next describe a procedure to compute lower and upper bounds on~\eqref{eq.lp.gral} within a factor 
$(4p+9)$ of each other by relying on a suitably constructed self-concordant barrier function.  
We concentrate on the case when $J\in \sJ(A;C)$ as otherwise~\eqref{eq.lp.gral.easier} can easily detect that $J \not \in \sJ(A;C)$.  By Proposition~\ref{prop.Hoffman}  the optimal value  of~\eqref{eq.lp.gral} equals
\begin{equation}\label{eq.HJ.dist}
\frac{1}{H_J(A;C)} = 
\max\{r: (y,w) \in (A\R^n)\times \R^J, \|(y,w)\|_2 \le r \Rightarrow (y,w) \in 
\D \}
\end{equation}
where $\D = \{(Ax,C_Jx + s): x\in \R^n, \, s\in \R^J_+, \|x\|_2 \le 1\}$. Equation~\eqref{eq.HJ.dist} has the following geometric interpretation: $1/H_J(A;C)$ is the distance from the origin to the relative boundary of $\D$.

Let $f(x,s):= -\log(1-\|x\|_2^2) - \dsum_{j=1}^p \log(s_j)$ and define 
$F: \ri(\D) \rightarrow \R$ as follows
\begin{equation}\label{eq.implicit}
\begin{array}{rl}F(y,w):= \dmin_{x,s} & f(x,s) \\
& Ax = y \\
& C_Jx + s = w. 
\end{array}
\end{equation}
From~\cite[Proposition 5.1.5]{NestN94} it follows that the function $F$ constructed in~\eqref{eq.implicit} is a $(p+2)$-self-concordant barrier function for $\D$.  A straightforward calculus argument shows that \begin{equation}\label{eq.dykin}
\mathcal E := \{d \in(A\R^n) \times \R^J: \ip{\nabla^2F(0,0) d}{d} \le 1 \} 
=  \{M^{1/2}d: d \in (A\R^n) \times \R^J, \|d\|\le 1\}, 
\end{equation}
where\begin{equation}
\label{eq.hessian}
M:= \matr{A & 0 \\ C_J & I} \nabla^2 f(\bar x, \bar s)^{-1} \matr{A & 0 \\ C_J & I}\transp
\end{equation}
and $(\bar x, \bar s)$ is the solution to~\eqref{eq.implicit} for $(y,w):= (0,0) \in \ri(\D)$.

The ellipsoid $\mathcal E$ in~\eqref{eq.dykin} is the Dikin ellipsoid in $(A\R^n)\times \R^J$ associated to $F$ and centered at $(0,0)$.  Therefore from the properties of self-concordant barriers~\cite{NestN94,Rene01} it follows that $\mathcal E \subseteq \D$ and 
$\{d\in \D: \ip{\nabla F(0,0)}{d} \ge 0 \} \subseteq (4p+9) \cdot \mathcal E $.  These two properties and~\eqref{eq.HJ.dist} imply that
\[
\sigma_{\min}(M^{1/2}) \le \frac{1}{H_J(A;C)} \le (4p+9) \cdot \sigma_{\min}(M^{1/2})
\]
where $\sigma_{\min}(M^{1/2})$ denotes the smallest positive singular value of $M^{1/2}$.

\medskip

We thus have the following procedure to estimate~\eqref{eq.lp.gral}: First, solve \eqref{eq.lp.gral.easier}.  If this optimal value is zero then the optimal value of~\eqref{eq.lp.gral} is zero as well.  Otherwise, let $(\bar x, \bar s)$ solve~\eqref{eq.implicit} for $(y,w) := (0,0)$ and let $M$ be as in~\eqref{eq.hessian}.  The values $\sigma_{\min}(M^{1/2})$ and $(4p+9)\cdot \sigma_{\min}(M^{1/2})$ are respectively a lower bound and an upper bound on the optimal value $1/H_J(A;C)$ of~\eqref{eq.lp.gral}.

\section{A Hoffman constant for polyhedral sublinear mappings}
\label{sec.proof}
We next present a characterization of the Hoffman constant for polyhedral sublinear mappings when the residual is known to intersect a predefined linear subspace. To that end, we will make extensive use of the following correspondence between polyhedral sublinear mappings and polyhedral cones.

A set-valued mapping $\Phi:\R^n \rightrightarrows\R^m$ is a {\em polyhedral sublinear mapping} if \[\graph(\Phi) = \{(x,y): y\in \Phi(x)\} \subseteq \R^n \times \R^m\] is a polyhedral cone.  Conversely, if $K \subseteq \R^n \times \R^m  $ is a polyhedral convex cone then the set-valued mapping $\Phi_K: \R^n \rightrightarrows\R^m$ defined via \[y \in \Phi_K(x) \Leftrightarrow (x,y) \in K\] is a polyhedral sublinear mapping since $\graph(\Phi_K) = K$ by construction.

Let $\Phi:\R^n \rightrightarrows\R^m$ be a  polyhedral sublinear mapping.  The domain, image, and norm of $\Phi$ are defined as follows:
\begin{align*}
\dom(\Phi) &= \{x\in \R^n: (x,y) \in \graph(\Phi) \text{ for some } y\in \R^m\},\\
\Image(\Phi) &= \{y\in \R^n: (x,y) \in \graph(\Phi) \text{ for some } x\in \R^n\},\\
\|\Phi\| &= \dmax_{x\in \dom(\Phi) \atop \|x\|\le 1} \min_{y\in \Phi(x)} \|y\|.
\end{align*}
In particular, the norm of the inverse mapping $\Phi^{-1}:\R^m\rightrightarrows \R^n$ is
\[
\|\Phi^{-1}\| = \dmax_{y\in \dom(\Phi^{-1})\atop \|y\|\le 1} \min_{x\in \Phi^{-1}(y)} \|x\|= \dmax_{y\in \Image(\Phi)\atop \|y\|\le 1} \min_{x\in \Phi^{-1}(y)} \|x\|.
\]
We will rely on the following more general concept of norm.  Let $\Phi:\R^n \rightrightarrows\R^m$ be a polyhedral sublinear mapping and $\L\subseteq\R^m$ be a linear subspace.  Let
\[
\| \Phi^{-1} \vert {\L}\| := \dmax_{y\in \Image(\Phi) \cap \L\atop \|y\|\le 1} \min_{x\in \Phi^{-1}(y)} \|x\|.
\]
It is easy to see that $\|\Phi^{-1}\vert {\L}\|$ is finite if  $\Phi:\R^n \rightrightarrows\R^m$ is a polyhedral sublinear mapping and $\L\subseteq \R^m$ is a linear subspace.

For $b\in \R^m$ and $S\subseteq \R^m$ define
\[
\dist_{\L}(b,S) = \inf\{\|b-y\|: y\in S, b-y\in \L\}.
\]
Observe that $\dist_{\L}(b,S) < \infty$ if and only if $(S-b)\cap \L \ne \emptyset$.  Furthermore, observe that $ \| \Phi^{-1} \vert {\L}\| =
\| \Phi^{-1} \|$ and  $\dist_{\L}(b,S)=\dist(b,S)$ when $\L=\R^m$.

Let $K\subseteq \R^n \times \R^m  $ be a polyhedral convex cone.  Let $\mathcal T(K):=\{T_K(u,v): (u,v)\in K\}$ where $T_K(u,v)$  denotes the {\em tangent} cone to $K$ at the point $(u,v)\in K$, that is,
\[
T_K(u,v) = \{(x,y) \in \R^n\times \R^m: (u,v) + t(x,y) \in K \; \text{ for some } t > 0\}.
\]
Observe that since $K$ is polyhedral the collection of tangent cones $\mathcal T(K)$ is finite. 

Recall that a polyhedral sublinear mapping $\Phi:\R^n \rightrightarrows\R^m$  is {\em relatively surjective} if $\Image(\Phi) = \Phi(\R^n)\subseteq\R^m$ is a linear subspace.
Given a polyhedral sublinear mapping $\Phi:\R^n \rightrightarrows\R^m$  let  
\[
\S(\Phi):=\{T\in \T(\graph(\Phi)): \Phi_T \; \text{ is relatively surjective}\}
\]
and
\[
\H(\Phi \vert \L):=\max_{T \in \S(\Phi)} \|\Phi_T^{-1}\vert {\L}\|.
\]

\begin{theorem}~\label{thm.main}  Let $\Phi:\R^n \rightrightarrows\R^m$ be a polyhedral sublinear mapping and $\L\subseteq\R^m$ be a linear subspace.  Then for all $b \in \Image(\Phi)$ and $u\in\dom(\Phi)$
\begin{equation}\label{eq.Hoffman.bound.symm}
\dist(u,\Phi^{-1}(b))\le \H(\Phi \vert \L)\cdot\dist_{\L}(b,\Phi(u)).
\end{equation}
Furthermore, the bound~\eqref{eq.Hoffman.bound.symm} is tight: If $\H(\Phi \vert \L) > 0$ then there exist $b \in \Image(\Phi)$ and  $u\in\dom(\Phi)$ such that $0 < \dist_{\L}(b,\Phi(u)) < \infty$ and
\[
\dist(u,\Phi^{-1}(b)) = \H(\Phi \vert \L)\cdot\dist_{\L}(b,\Phi(u)).
\]
\end{theorem}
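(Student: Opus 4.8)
The plan is to establish the inequality \eqref{eq.Hoffman.bound.symm} first, and then produce the point that achieves equality. For the inequality, fix $b\in\Image(\Phi)$ and $u\in\dom(\Phi)$. If $\dist_{\L}(b,\Phi(u))=\infty$ there is nothing to prove, so assume $(\Phi(u)-b)\cap\L\neq\emptyset$. The key idea is to reduce the statement about $\Phi$ at an arbitrary point $u$ to a statement about a single tangent cone of $\graph(\Phi)$. Concretely, I would pick $x^\ast\in\Phi^{-1}(b)$ (so $(x^\ast,b)\in\graph(\Phi)=:K$) and a nearest point $y^\ast\in\Phi(u)$ with $u-x^\ast$... more precisely, consider the segment from $(x^\ast,b)$ to $(u,y)$ inside $K$, where $y\in\Phi(u)$ realizes $\dist_{\L}(b,\Phi(u))$, i.e. $y-b\in\L$ and $\|y-b\|=\dist_{\L}(b,\Phi(u))$. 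The direction $(u-x^\ast,\,y-b)$ lies in the tangent cone $T:=T_K(x^\ast,b)$. Since $K$ is polyhedral, $T\in\T(K)$, and by choosing $x^\ast$ appropriately (e.g. the point of $\Phi^{-1}(b)$ maximizing the dimension of the face of $K$ whose relative interior contains $(x^\ast,b)$, or equivalently so that $T$ is as ``large'' as possible) one can arrange that $\Phi_T$ is relatively surjective, so $T\in\S(\Phi)$. This is the crux: I must show such a choice of base point forces $\Phi_T$ to be relatively surjective — intuitively, at a point of $\Phi^{-1}(b)$ that is ``generic'' within the solution set, the tangent cone has full image in the linear span it can reach. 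Granting this, $(u-x^\ast,y-b)\in\graph(\Phi_T)$ gives $u-x^\ast\in\Phi_T^{-1}(y-b)$, and since $y-b\in\L\cap\Image(\Phi_T)$,
\[
\dist(u,\Phi^{-1}(b))\le \|u-x^\ast\|\le \|\Phi_T^{-1}\vert\L\|\cdot\|y-b\| \le \H(\Phi\vert\L)\cdot\dist_{\L}(b,\Phi(u)),
\]
where the middle inequality uses the definition of $\|\Phi_T^{-1}\vert\L\|$ applied to the specific preimage $u-x^\ast$ (after noting $\dist(x^\ast,\Phi_T^{-1}(y-b))$ bounds $\min$ over preimages) — here one must be slightly careful that $\|\Phi_T^{-1}\vert\L\|$ is defined via the \emph{minimum}-norm preimage, so I would instead choose $x^\ast$ so that $u-x^\ast$ is itself a minimum-norm preimage, or absorb the gap by taking $x^\ast$ to be the projection of $u$ onto $\Phi^{-1}(b)$ and arguing the tangent direction still lands in a relatively surjective tangent cone.

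For tightness, suppose $\H(\Phi\vert\L)>0$ and let $T\in\S(\Phi)$ attain the maximum, so $\|\Phi_T^{-1}\vert\L\|=\H(\Phi\vert\L)$. By definition of this norm there is $\hat y\in\Image(\Phi_T)\cap\L$ with $\|\hat y\|\le 1$ and $\min_{x\in\Phi_T^{-1}(\hat y)}\|x\|=\H(\Phi\vert\L)$; call the minimizer $\hat x$. Now $T=T_K(u_0,b_0)$ for some $(u_0,b_0)\in K=\graph(\Phi)$, and for small $t>0$ the points $(u_0+t\hat x,\,b_0+t\hat y)$ lie in $K$. Set $u:=u_0+t\hat x$ and $b:=b_0$. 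Then $b\in\Image(\Phi)$, $u\in\dom(\Phi)$, and $(u,b_0+t\hat y)\in\graph(\Phi)$ witnesses that $t\hat y\in\Phi(u)-b$ with $t\hat y\in\L$, giving $\dist_{\L}(b,\Phi(u))\le t\|\hat y\|$; a matching lower bound comes from the fact that $t\hat y$ is the minimum-norm such residual for $t$ small (using that near $(u_0,b_0)$ the cone $K$ looks like the tangent cone $T$, plus the minimality of $\hat x,\hat y$). Similarly $\dist(u,\Phi^{-1}(b))$ equals $t\|\hat x\|$ for $t$ small: the ``$\le$'' is immediate since $u_0\in\Phi^{-1}(b_0)$ would give $t\|\hat x\|$... wait, one needs $u_0+t\hat x - (\text{nearest solution})$; the ``$\ge$'' direction is the content — it requires that no point of $\Phi^{-1}(b)$ is closer than $t\|\hat x\|$, which again follows by passing to the tangent cone $T$ and invoking minimality of $\hat x$ among $\Phi_T^{-1}(\hat y)$. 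Combining, $\dist(u,\Phi^{-1}(b))=t\|\hat x\|=\H(\Phi\vert\L)\cdot t\|\hat y\|=\H(\Phi\vert\L)\cdot\dist_{\L}(b,\Phi(u))$, and $0<\dist_{\L}(b,\Phi(u))=t\|\hat y\|<\infty$ provided $\hat y\neq 0$, which holds because $\H(\Phi\vert\L)>0$.

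The main obstacle, as flagged above, is the \textbf{reduction to a relatively surjective tangent cone}: proving that one can always choose the base point $(x^\ast,b)\in\graph(\Phi)$ so that (i) the tangent direction $(u-x^\ast,\,y-b)$ lands in $T_K(x^\ast,b)$, (ii) $\Phi_{T_K(x^\ast,b)}$ is relatively surjective, and (iii) $u-x^\ast$ is (or can be replaced by) a minimum-norm preimage so the bound by $\|\Phi_T^{-1}\vert\L\|$ is legitimate. The natural device is to let $x^\ast$ be the point of the polyhedron $\Phi^{-1}(b)$ lying in the relative interior of the face that the segment toward $u$ exits through — equivalently, among all $x\in\Phi^{-1}(b)$ achieving $\dist(u,\Phi^{-1}(b))$, pick one in the relative interior of the minimal face of $\Phi^{-1}(b)$ containing the set of such minimizers — and then a separating-hyperplane / polyhedral-genericity argument shows the tangent cone $\Phi_T$ has image equal to a linear subspace (the lineality forced by being at a relative-interior point), i.e. $T\in\S(\Phi)$. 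I expect this to be where the polyhedrality of $\Phi$ is used essentially and where the careful bookkeeping lives; everything else is manipulation of the definitions of $\dist_{\L}$, $\|\cdot\vert\L\|$, and tangent cones.
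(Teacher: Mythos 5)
Your tightness argument is essentially the paper's: pick a maximizing $T\in\S(\Phi)$, a unit direction in $\L$ attaining $\|\Phi_T^{-1}\vert\L\|$ with min-norm preimage $\bar z$, write $T=T_K(u_0,b_0)$, and perturb by a small $t>0$. (You make it harder than necessary by trying to compute $\dist(u,\Phi^{-1}(b))$ and $\dist_\L(b,\Phi(u))$ exactly; it suffices to observe that every $x\in\Phi^{-1}(b)$ yields $(x-u_0,td)\in T$, hence $\|x-u_0\|\ge t\|\bar z\|=\H(\Phi\vert\L)\,t$, and to combine this lower bound with the already-proved upper bound.)

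The upper-bound half, however, has a genuine gap, and it sits exactly where you flag it. Your plan needs a base point $(x^\ast,b)$ such that (i) $T:=T_K(x^\ast,b)$ is relatively surjective and (iii) $u-x^\ast$ is controlled by the \emph{minimum}-norm preimage of $y-b$ in $T$. Neither of the devices you suggest delivers this. Taking $x^\ast$ to be the projection of $u$ onto $\Phi^{-1}(b)$ (or a relative-interior point of the optimal face) does not force relative surjectivity: for $\Phi(x)=Ax+\R^2_+$ with $A=\begin{pmatrix}1\\-1\end{pmatrix}$, $b=0$, $u=1$, the only solution is $x^\ast=0$ and the tangent cone there has image $\{y:y_1+y_2\ge 0\}$, which is not a subspace. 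Moreover, even granting $T\in\S(\Phi)$, the definition of $\|\Phi_T^{-1}\vert\L\|$ bounds only \emph{some} preimage $z$ of $y-b$ in $T$, not the particular preimage $u-x^\ast$, and a small-norm tangent direction $(z,y-b)$ at $(x^\ast,b)$ only certifies membership in $K$ for \emph{small} steps $t>0$, so it produces no point of $\Phi^{-1}(b)$ near $u$; there is no evident way to "choose $x^\ast$ so that $u-x^\ast$ is min-norm", since the tangent cone changes with $x^\ast$. The paper resolves both difficulties by a different route: it first proves (Lemma~\ref{lemma.rel.surj}, via the norm-duality formula \eqref{eq.norm.Hoffman}, an extreme-point selection, and minimal faces of $K^*$) that $\|\Phi_T^{-1}\vert\L\|\le\H(\Phi\vert\L)$ for \emph{every} tangent cone $T\in\T(K)$, so no reduction to a surjective tangent cone at a special base point is needed; and it then works from $(u,v)$ with $v\in\Phi(u)$, moving the image coordinate toward $b$ along $d=(b-v)/\|b-v\|$ in tangent steps of controlled horizontal length $\le\H(\Phi\vert\L)$, using polyhedrality to get compactness of the feasible set of \eqref{eq.opt.prob} and deriving a contradiction at an optimal partial step. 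Without an analogue of Lemma~\ref{lemma.rel.surj} and without a mechanism (iteration/compactness) to convert local tangent information into a global step, your sketch does not close.
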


The following lemma is the main technical component in the proof of Theorem~\ref{thm.main}. We defer its proof to the end of this section.

\begin{lemma}\label{lemma.rel.surj}
Let $\Phi:\R^n \rightrightarrows\R^m$ be a polyhedral sublinear mapping and $\L \subseteq \R^m$ be a linear subspace.  Then
\[
\dmax_{T\in \T(\graph(\Phi))}\|\Phi_T^{-1}\vert {\L}\| =  \max_{T \in \S(\Phi)} \|\Phi_T^{-1}\vert {\L}\|.
\]
\end{lemma}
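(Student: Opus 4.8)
The plan is to prove the two-sided inequality, of which one direction is immediate. Since $\S(\Phi) \subseteq \T(\graph(\Phi))$, we trivially have $\max_{T\in\S(\Phi)}\|\Phi_T^{-1}\vert\L\| \le \max_{T\in\T(\graph(\Phi))}\|\Phi_T^{-1}\vert\L\|$, so the entire content is the reverse inequality: for every tangent cone $T = T_K(u,v)$ with $K = \graph(\Phi)$, there exists $T' \in \S(\Phi)$ (i.e.\ a tangent cone $T'$ for which $\Phi_{T'}$ is relatively surjective) with $\|\Phi_T^{-1}\vert\L\| \le \|\Phi_{T'}^{-1}\vert\L\|$. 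The natural way to produce such a $T'$ is to start from $T$ and pass to a tangent cone of $T$ itself at a cleverly chosen point: because $K$ is polyhedral, tangent cones of tangent cones of $K$ are again tangent cones of $K$ (a face of a face is a face), so $T' \in \T(K)$ automatically. The key is to choose the base point so that relative surjectivity is achieved without decreasing the norm.

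First I would fix $T = T_K(u,v)$ and, using that $\|\Phi_T^{-1}\vert\L\|$ is a finite maximum of a continuous function over a compact set, pick $\bar y \in \Image(\Phi_T)\cap\L$ with $\|\bar y\|\le 1$ and $\bar x \in \Phi_T^{-1}(\bar y)$ with $\|\bar x\| = \min\{\|x\|: x\in\Phi_T^{-1}(\bar y)\} = \|\Phi_T^{-1}\vert\L\|$; thus $(\bar x,\bar y)\in T$. Then I would consider the tangent cone $T' := T_T(\bar x,\bar y)$, which is a tangent cone of $K$ as noted above, hence $T'\in\T(\graph(\Phi))$. Two things need to be checked. (i) \emph{Norm does not decrease:} since $(\bar x,\bar y)\in T$ and $T'$ is the tangent cone to $T$ at that point, we have $T \subseteq T' $ is false in general, but rather $T'$ contains the translate behavior near $(\bar x,\bar y)$; the right statement is that $\bar y \in \Image(\Phi_{T'})\cap\L$ and $\Phi_{T'}^{-1}(\bar y) \subseteq \Phi_{T}^{-1}(\bar y) - \bar x$ shifted appropriately — I would instead argue directly that $(\bar x,\bar y)$ is a ``recession-type'' direction so that $(t\bar x, t\bar y)\in T'$ for all $t\ge 0$, and that any $x'\in\Phi_{T'}^{-1}(\bar y)$ satisfies $\|x'\|\ge\|\bar x\|$ by minimality of $\bar x$ in $T$ together with $T'\subseteq $ (the relevant local cone of) $T$. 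Hence $\|\Phi_{T'}^{-1}\vert\L\| \ge \|\bar x\| = \|\Phi_T^{-1}\vert\L\|$. (ii) \emph{Relative surjectivity:} one should choose $(\bar x,\bar y)$ in the relative interior of the minimal face of $T$ containing it, equivalently iterate the construction finitely many times, so that the resulting $T'$ is a linear subspace intersected with the cone in the appropriate coordinates, forcing $\Image(\Phi_{T'})$ to be a subspace. I expect the cleanest route is an induction on $\dim\lspan(T)$ or on the number of facets of $T$: if $\Phi_T$ is already relatively surjective, take $T'=T$; otherwise $\Image(\Phi_T)$ is not a subspace, so there is a direction on its relative boundary, and moving to a suitable tangent subcone strictly reduces the complexity while, by (i), not reducing the norm.

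The main obstacle will be part (ii): making precise why one can always move to a proper tangent subcone that is ``more surjective'' while controlling the norm, and why the process terminates at a relatively surjective mapping rather than stalling. The subtle point is that relative surjectivity of $\Phi_T$ is equivalent to a statement about $T$ as a cone in $\R^n\times\R^m$ — namely that the projection of $T$ onto $\R^m$ is a linear subspace — and I would want a lemma saying: if the projection of a polyhedral cone $T$ onto $\R^m$ is not a subspace, then there is a point $(\bar x,\bar y)\in T$ with $\bar y$ on the relative boundary of that projection, and the tangent cone $T_T(\bar x,\bar y)$ has strictly larger lineality in its $\R^m$-projection; combined with the minimality choice of $\bar x$ this yields the norm bound. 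I would carry this out by choosing $(\bar x, \bar y)$ to simultaneously realize the norm $\|\Phi_T^{-1}\vert\L\|$ \emph{and} lie in the relative interior of a minimal face, so that a single passage to a tangent cone already lands in $\S(\Phi)$; verifying that these two requirements are compatible (a genericity/relative-interior argument) is where the real work lies.
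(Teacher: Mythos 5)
Your reduction to the reverse inequality is right, and your step (i) is rescuable but mis-justified: contrary to what you assert, one always has $T\subseteq T':=T_T(\bar x,\bar y)$ (a convex cone is contained in its tangent cone at any of its points), so minimality of $\bar x$ in $\Phi_T^{-1}(\bar y)$ does not transfer directly to the larger set $\Phi_{T'}^{-1}(\bar y)$; the inequality $\min\{\|x'\|:x'\in\Phi_{T'}^{-1}(\bar y)\}=\|\bar x\|$ does hold, but the correct reason is that $T$ and $T'$ have the same active constraints at $(\bar x,\bar y)$, so if $(x',\bar y)\in T'$ had $\|x'\|<\|\bar x\|$ then $((1-t)\bar x+tx',\bar y)\in T$ for small $t>0$, contradicting minimality. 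The genuine gap is step (ii), which you yourself flag but never fill: nothing guarantees that a norm-attaining point can be chosen (or reached by iterating) whose tangent cone is relatively surjective, and your mechanism can in fact stall. Take $n=1$, $m=2$, $\L=\R^2$ with the Euclidean norm, $K=\graph(\Phi)=\{(x,y_1,y_2):y_1=x,\ y_2\ge 0\}$, and $T=K\in\T(K)$. Then $\Image(\Phi_T)=\R\times\R_+$ is not a subspace, $\|\Phi_T^{-1}\vert\L\|=1$, and the only norm-attaining points are $(\pm 1,\pm 1,0)$, which lie in the lineality space of $T$, so $T_T(\bar x,\bar y)=T$: a single passage produces nothing new and the iteration stalls at a cone not in $\S(\Phi)$. (The lemma still holds because the tangent cone at, e.g., $(0,0,1)$ is the subspace $\{(x,y_1,y_2): y_1=x\}$ and also has norm $1$ --- but that base point does not attain the norm, so your selection rule never finds it; with the $\ell_\infty$ norm on $\R^2$ it would, which shows a ``generic optimizer'' argument cannot work uniformly over norms.) Note also that tangent cones are supersets, not ``subcones,'' of $T$, so an induction that ``strictly reduces complexity'' by passing to them needs a different monotone quantity and, crucially, needs progress, which the example denies.

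The paper resolves precisely this point on the dual side instead of the primal side. It expresses $\|\Phi_T^{-1}\vert\L\|$ by the duality formula \eqref{eq.norm.Hoffman}, picks a dual optimal $(\bar u,\bar v,\bar y)$, replaces $\bar v$ by an extreme point of the polyhedron $V$ of dual-optimal $v$'s (existence of an extreme point uses the harmless reduction $\lspan(\Image(\Phi))=\R^m$), lets $\bar F$ be the minimal face of $K^*$ containing $(\bar u,\bar v)$ and sets $\bar T:=\bar F^*\in\T(K)$; the extreme-point choice yields $(0,v')\in\bar T^*\Rightarrow v'=0$, i.e.\ surjectivity of $\Phi_{\bar T}$, and then Proposition~\ref{prop.Hoffman.surj} gives $\|\Phi_T^{-1}\vert\L\|\le\|\Pi_{\L}(\bar v)\|^*\le\|\Phi_{\bar T}^{-1}\vert\L\|$. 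If you want to stay primal, you need a substitute for this extreme-point/minimal-face selection --- some way of moving the base point inside $K$ away from the norm-attaining point while still certifying that the norm does not drop --- and that is exactly the missing piece of your plan.
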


\medskip

\begin{proof}[Proof of Theorem~\ref{thm.main}]
Assume  that $b-v\in \L$ for some $v\in \Phi(u)$ as otherwise the right-hand-side in
\eqref{eq.Hoffman.bound.symm} is $+\infty$ and \eqref{eq.Hoffman.bound.symm} trivially holds.  We will prove the following equivalent statement to~\eqref{eq.Hoffman.bound.symm}:  For all $b \in \Image(\Phi)$ and $(u,v)\in\graph(\Phi)$ with $b-v \in \L$
\begin{equation}\label{eq.Hoffman.bound}
\dist(u,\Phi^{-1}(b))\le \H(\Phi \vert \L)\cdot\|b-v\|.
\end{equation}
To ease notation, let $K:=\graph(\Phi)$ so in particular $\Phi = \Phi_K$.  We will use the following consequence of Lemma~\ref{lemma.rel.surj}: $\|\Phi_T^{-1}\vert {\L}\| \le \H(\Phi \vert \L)$ for all $T\in \T(K)$.

Assume $b-v\ne 0$ as otherwise there is nothing to show.  
We proceed by contradiction.   Suppose $b \in \Image(\Phi)$ and $(u,v) \in K$ are such that $b-v\in \L$ and
\begin{equation}\label{eq.contra}
\vertiii{x-u} > \H(\Phi \vert \L) \cdot \|b-v\|
\end{equation}
for all $x$ such that $(x,b)\in K$.
Let
$
d:= \frac{b-v}{\|b-v\|} \in \L
$
 and consider the optimization problem
\begin{equation}\label{eq.opt.prob}
\begin{array}{rl}
\displaystyle\max_{w,t} & t \\
& (u+w,v+td) \in K, \\
& \|w\| \le \H(\Phi \vert \L) \cdot t.
\end{array}
\end{equation}
Since $b\in \Image(\Phi)=\Image(\Phi_K)$ it follows that  
$d = (b-v)/\|b-v\| \in \Image(\Phi_{T_K(u,v)})\cap \L$.
Hence there exists $(z,d) \in T_K(u,v)$ with $\|z\|\le \|\Phi_{T_K(u,v)}^{-1}\vert {\L} \| \le \H(\Phi \vert \L)$.  Since $K$ is polyhedral, for $t > 0$ sufficiently small $(u+tz,v+td) \in K$ and so $(w,t) := (tz,t)$ is feasible for problem~\eqref{eq.opt.prob}.  Let
$$C:=\{(w,t) \in \R^n \times \R_+: (w,t) \text{ is feasible for }~\eqref{eq.opt.prob} \}.$$
Assumption~\eqref{eq.contra} implies that $t < \|b-v\|$ for all $(w,t)\in C$.  In addition, since $K$ is polyhedral, it follows that $C$ is compact. Therefore~\eqref{eq.opt.prob} has an optimal solution $(\bar w,\bar t)$ with $0<\bar t < \|b-v\|.$

Let $(u',v'):= (u + \bar w,v+\bar t d) \in K$.
Consider the modification of~\eqref{eq.opt.prob} obtained by replacing $(u,v)$ with $(u',v')$, namely
\begin{equation}\label{eq.opt.prob.mod}
\begin{array}{rl}
\displaystyle\max_{w' ,t'} & t' \\
& (u'+w',v'+t'd) \in K, \\
& \vertiii{w'} \le \H(\Phi \vert \L)\cdot t'.
\end{array}
\end{equation}
Observe that
$
b - v' = b-v-\bar t d = (\|b-v\| - \bar t)d \ne 0.
$
Again since $b\in \Image(\Phi)$ it follows that $d= \frac{b-v'}{\|b-v'\|}  \in\Image(\Phi_{T_K(u',v')})\cap \L$.  Hence there exists $(z',d)\in T_K(u',v')$ such that  $\|z'\|\le \|\Phi_{T_K(u',v')}^{-1}\vert {\L}\| \le \H(\Phi \vert \L)$.  Therefore,~\eqref{eq.opt.prob.mod} has a feasible point $(w',t') = (t'z',t')$ with $t' > 0$.  In particular $(u'+w',v'+t'd) = (u+\bar w + w', v + (\bar t + t')d) \in K$ with $\|\bar w + w'\| \le \|\bar w\| + \|w'\| \le \H(\Phi \vert \L) \cdot(\bar t +t')$ and $\bar t + t' >\bar t$.  This contradicts the optimality of $(\bar w,\bar t)$ for~\eqref{eq.opt.prob}.

To show that the bound is tight, suppose $\H(\Phi \vert \L) = \|\Phi_T^{-1}\vert {\L}\| > 0$ for some $T\in \S(\Phi) \subseteq \T(K)$. The construction of $\vertiii{\Phi_T^{-1}\vert {\L}}$ implies that there exists $d \in \L$ with  $\|d\|=1$ such that the problem
\begin{equation}\label{eq.tight}
\begin{array}{rl}
\displaystyle\min_{z} & \vertiii{z} \\
& (z,d) \in T
\end{array}
\end{equation}
is feasible and has an optimal solution $\bar z$ with $\|\bar z\| = \|\Phi_T^{-1}\vert {\L} \| = \H(\Phi \vert \L)>0$.  Let $(u,v)\in K$ be such that $T = T_K(u,v)$.   Let $b:=v+td$ where $t > 0$ is small enough so that $(u,v) + t(\bar z,d)\in K$.  Observe that
$b \in \Image(\Phi)$ and $b - v = t d \ne 0$.
To finish, notice that if $x\in \Phi^{-1}(b)$ then $(x-u,b-v) = (x-u,td) \in T_K(u,v) = T$.  The optimality of $\bar z$ then implies that
\[
\|x-u\| \ge \H(\Phi \vert \L) \cdot t = \H(\Phi \vert \L)\cdot \|b-v\|.
\]
Since this holds for all $x\in \Phi^{-1}(b)$ and $b-v\in \L\setminus \{0\}$, it follows that 
$\dist(u,\Phi^{-1}(b)) \ge \H(\Phi \vert \L)\cdot \|b-v\| \ge \H(\Phi \vert \L)\cdot\dist_{\L}(b,\Phi(u))>0.$
\end{proof}

The proof of Lemma~\ref{lemma.rel.surj} relies on a  convex duality construction.  
In each of $\R^n$ and $\R^m$ let~$\|\cdot\|^*$ denote the dual norm of $\|\cdot\|$, that is, for $u\in \R^n$ and $v\in\R^m$
\[
\|u\|^*:=\dmax_{x\in \R^n \atop \|x\|\le 1} \ip{u}{x}\, \text{ and } \, \|v\|^*:=\dmax_{y\in \R^m \atop \|y\|\le 1} \ip{v}{y}.
\]
Given a cone $K\subseteq\R^n \times \R^n$, let $K^* \subseteq\R^n \times \R^m$ denote its dual cone, that is,
\[
K^*:= \{(u,v) \in \R^n\times \R^m: \ip{u}{x}+\ip{v}{y} \ge 0 \text{ for all } (x,y) \in  K\}.
\]
Given a sublinear mapping $\Phi: \R^n\rightrightarrows\R^m$, let $\Phi^*: \R^m\rightrightarrows \R^n$ denote its {\em upper adjoint,} that is
\[
u\in \Phi^*(v) \Leftrightarrow \ip{u}{x}\le \ip{v}{y} \text{ for all } (x,y) \in \graph(\Phi).
\]
Equivalently, $u \in \Phi^*(v) \Leftrightarrow (-u,v) \in \graph(\Phi)^*$.

Observe that  for a polyhedral convex cone $T\subseteq\R^n \times \R^m$ and a linear subspace $\L\subseteq \R^m$
\[
\begin{array}{rl}
\|\Phi_T^{-1}\vert {\L}\| = \dmax_{y} & \vertiii{\Phi_T^{-1}(y)}\\
&  y \in \Image(\Phi_T) \cap \L,\\
& \|y\|\le 1,
\end{array}
\]
where
\begin{equation}\label{primal.Hoffman}
\begin{array}{rl}
\vertiii{\Phi_T^{-1}(y)} := \dmin_{x} & \|x\| \\
& (x,y) \in T.
\end{array}
\end{equation}
By convex duality it follows that
\begin{equation}\label{dual.Hoffman}
\begin{array}{rl}
\vertiii{\Phi_T^{-1}(y)} =  \dmax_{u,v} & -\ip{v}{y},\\
& \|u\|^* \le 1, \\
& (u,v)\in T^*.
\end{array}
\end{equation}
Therefore when $T$ is a polyhedral cone
\begin{equation}\label{eq.norm.Hoffman}
\begin{array}{rl}
\vertiii{\Phi_T^{-1}\vert {\L}} = \dmax_{u,v,y} &  -\ip{v}{y} \\
&  y \in \Image(\Phi_T) \cap \L,\\
& \|y\|\le 1, \\
& \|u\|^* \le 1, \\
& (u,v)\in T^*.
\end{array}
\end{equation}

For a linear subspace $\L\subseteq \R^m$ let $\Pi_{\L}: \R^m \rightarrow \L$ denote the orthogonal projection onto $\L$.  The following proposition is in the same spirit as  Borwein's norm-duality Theorem~\cite{Borw83}.

\begin{proposition}\label{prop.Hoffman.surj}
Let $\Phi:\R^n\rightrightarrows\R^m$ be a  polyhedral sublinear mapping
and $\L\subseteq \R^m$ be a linear subspace.  If $\Phi$ is relatively surjective then
\[
\H(\Phi \vert \L) = \|\Phi^{-1}\vert {\L}\| = \dmax_{u\in \Phi^*(v)\atop \|u\|^*\le 1} \|\Pi_{\Image(\Phi) \cap \L}(v)\|^* =  
\frac{1}
{\dmin_{u\in \Phi^*(v)\atop\|\Pi_{\Image(\Phi) \cap \L}(v)\|^*=1}\|u\|^*}.
\]
\end{proposition}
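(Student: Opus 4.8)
The plan is to verify the chain of equalities link by link, working from the definitions and the duality formula~\eqref{eq.norm.Hoffman}, and deliberately avoiding Lemma~\ref{lemma.rel.surj} and Theorem~\ref{thm.main} so that this proposition stays available as an ingredient for the (later) proof of Lemma~\ref{lemma.rel.surj}. Write $K := \graph(\Phi)$ and $W := \Image(\Phi)\cap\L$. First I would prove $\H(\Phi\vert\L) = \|\Phi^{-1}\vert\L\|$. Since $K$ is a convex cone, $K = T_K(0,0)\in\T(K)$, and $\Phi_K = \Phi$ is relatively surjective by hypothesis, so $K\in\S(\Phi)$; this gives $\|\Phi^{-1}\vert\L\| = \|\Phi_K^{-1}\vert\L\|\le\H(\Phi\vert\L)$. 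For the reverse inequality, fix $T = T_K(u_0,v_0)\in\S(\Phi)$ and note: (a) $K\subseteq T$, since for $k\in K$ and $t\in(0,1]$ the point $(u_0,v_0) + tk = (1-t)(u_0,v_0) + t((u_0,v_0)+k)$ is a convex combination of two points of the convex cone $K$; and (b) $\Image(\Phi_T) = \Image(\Phi)$, where $\supseteq$ follows from (a), and $\subseteq$ holds because any $(x,y)\in T$ satisfies $(u_0+tx,\,v_0+ty)\in K$ for some $t>0$, hence $v_0+ty\in\Image(\Phi)$, and since $v_0\in\Phi(u_0)\subseteq\Image(\Phi)$ and $\Image(\Phi)$ is a linear subspace we conclude $y\in\Image(\Phi)$. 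By (a) and (b), for each $y\in\Image(\Phi_T)\cap\L = W$ we have $\Phi^{-1}(y)\subseteq\Phi_T^{-1}(y)$, hence $\vertiii{\Phi_T^{-1}(y)}\le\vertiii{\Phi^{-1}(y)}$; maximizing over $y\in W$ with $\|y\|\le1$ yields $\|\Phi_T^{-1}\vert\L\|\le\|\Phi^{-1}\vert\L\|$, and the maximum over $T\in\S(\Phi)$ finishes this step.

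Next I would derive $\|\Phi^{-1}\vert\L\| = \dmax\{\|\Pi_W(v)\|^* : u\in\Phi^*(v),\ \|u\|^*\le1\}$ by applying~\eqref{eq.norm.Hoffman} with the polyhedral cone $T = K$, which reads $\|\Phi^{-1}\vert\L\| = \max\{-\ip{v}{y} : y\in W,\ \|y\|\le1,\ \|u\|^*\le1,\ (u,v)\in K^*\}$. Using the definition of the upper adjoint, $(u,v)\in K^*\Leftrightarrow -u\in\Phi^*(v)$; relabelling $u\leftarrow -u$ (which leaves $\|u\|^*$ unchanged) and $y\leftarrow -y$ (legitimate since $W$ is a subspace and $\|-y\| = \|y\|$) turns the problem into $\max\{\ip{v}{y} : y\in W,\ \|y\|\le1,\ u\in\Phi^*(v),\ \|u\|^*\le1\}$. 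For fixed feasible $v$, the inner maximum of $\ip{v}{y}$ over $y\in W$ with $\|y\|\le1$ is exactly $\|\Pi_W(v)\|^*$, understood here (and in the statement) as the dual norm of $\|\cdot\|$ relative to $W$ evaluated at $\Pi_W(v)$, i.e. $\max\{\ip{v}{y} : y\in W,\ \|y\|\le1\}$, using $\ip{v}{y} = \ip{\Pi_W(v)}{y}$ for $y\in W$. This produces the claimed formula. The feasible set is nonempty ($u = v = 0$), and since $\Phi$ is relatively surjective one may restrict $v$ to $\Image(\Phi)$, on which the constraint $\|u\|^*\le1$ forces boundedness, so the maximum is attained and finite.

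Finally, the reciprocal identity $\dmax\{\|\Pi_W(v)\|^* : u\in\Phi^*(v),\ \|u\|^*\le1\} = 1/\dmin\{\|u\|^* : u\in\Phi^*(v),\ \|\Pi_W(v)\|^* = 1\}$ is a homogeneity argument: $\{(u,v) : u\in\Phi^*(v)\}$ is a cone (the upper adjoint is positively homogeneous) and both $\|u\|^*$ and $\|\Pi_W(v)\|^*$ are positively homogeneous of degree one, so a maximizer of the left-hand side rescales to a feasible point of the minimization and conversely; the degenerate cases ($W = \{0\}$, where both sides vanish, and a zero minimum) are covered by the conventions $1/0 = +\infty$ and $1/(+\infty) = 0$. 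I expect the main obstacle to be Step 2 — reconciling the restricted-image duality formula~\eqref{eq.norm.Hoffman} (where the residual direction $y$ is confined to $\Image(\Phi)\cap\L$) with the projected-dual-norm expression in the statement — together with the use of relative surjectivity in Step 1, which is precisely what forces $\Image(\Phi_T) = \Image(\Phi)$ for tangent cones $T$ and so prevents the tangential mappings $\Phi_T$ from admitting residual directions unavailable to $\Phi$ itself.
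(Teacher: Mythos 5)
Your proposal is correct and follows essentially the same route as the paper's own proof: establish $\H(\Phi\vert\L)=\|\Phi^{-1}\vert\L\|$ from $\graph(\Phi)\subseteq T$ together with relative surjectivity (which is exactly what makes $\Image(\Phi_T)=\Image(\Phi)$), then apply the duality formula~\eqref{eq.norm.Hoffman} with $T=\graph(\Phi)$ and the adjoint correspondence $(u,v)\in\graph(\Phi)^*\Leftrightarrow -u\in\Phi^*(v)$, and finish with the homogeneity argument for the reciprocal expression. You simply spell out steps the paper leaves implicit (the tangent-cone inclusion, the image identity, the sign changes, attainment), and your reading of $\|\Pi_{\Image(\Phi)\cap\L}(v)\|^*$ as the dual norm taken over the subspace is consistent with what the paper's own derivation from~\eqref{eq.norm.Hoffman} actually produces; your care to avoid Lemma~\ref{lemma.rel.surj} also matches the paper's logical order, since the paper proves this proposition independently and then uses it inside the proof of that lemma.
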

\begin{proof}
Since $\graph(\Phi) \subseteq T$ for all $T\in \mathcal T(\graph(\Phi))$ and $\Phi$ is relatively surjective, it  follows that $\vertiii{\Phi_T^{-1}\vert {\L}} \le \vertiii{\Phi^{-1}\vert {\L}}$ for all $T\in \mathcal T(\graph(\Phi))$.
 Consequently
$ \H(\Phi \vert \L) = \|\Phi^{-1}\vert {\L}\|.$
Furthermore, since $\Phi$ is relatively surjective, from~\eqref{eq.norm.Hoffman} it follows that
\[
\begin{array}{rl}
\vertiii{\Phi^{-1}\vert {\L}} = \dmax_{u,v} &  \|\Pi_{\Image(\Phi) \cap \L}(v)\|^* \\
& \|u\|^* \le 1, \\
& u\in \Phi^*(v).
\end{array}
\]
The latter quantity is evidently the same as 
$\dfrac{1}
{\dmin_{u\in \Phi^*(v)\atop\|\Pi_{\Image(\Phi) \cap \L}(v)\|^*=1}\|u\|^*}.$
\end{proof}

We will rely on the following equivalence between {\em surjectivity} and {\em non-singularity} of sublinear mappings.   A standard convex separation argument shows that  a closed sublinear mapping $\Phi:\R^n \rightrightarrows \R^m$ is  surjective if and only if
\begin{equation}\label{eq.non.sing}
(0,v) \in \graph(\Phi)^* \Rightarrow v=0.
\end{equation}
Condition~\eqref{eq.non.sing} is a kind of {\em non-singularity} of  $\Phi^*$ as it can be rephrased as $0 \in \Phi^*(v) \Rightarrow v=0.$

\begin{proof}[Proof of Lemma~\ref{lemma.rel.surj}] Without loss of generality assume $\linspan(\Image(\Phi)) = \R^m$ as otherwise we can work with the restriction of $\Phi$ as a mapping from $\R^n$ to $\linspan(\Image(\Phi))$.  
To ease notation let $K:=\graph(\Phi)$.  We need to show that
\[
\dmax_{T\in \T(K)}  \|\Phi_T^{-1}\vert {\L}\| =
\dmax_{T\in \S(\Phi)}  \|\Phi_T^{-1}\vert {\L}\|.
\]
By construction, it is immediate that
\[
\dmax_{T\in \mathcal T(K)}  \|\Phi_T^{-1}\vert {\L}\| \ge
\dmax_{T\in \S(\Phi)}  \|\Phi_T^{-1}\vert {\L}\|.
\]
To prove the reverse inequality let $T \in \mathcal T(K)$ be fixed and let $(\bar u,\bar v, \bar y)$ attain the optimal value $\|\Phi_T^{-1}\vert {\L}\|$ in~\eqref{eq.norm.Hoffman}.
Let $\bar F$ be the minimal face of $K^*$ containing $(\bar u, \bar v)$ and $\bar T := \bar F^*  \in \mathcal T(K)$.   As we detail below, $(\bar u,\bar v, \bar y)$ can be chosen so that $\Phi_{\bar T}$ is surjective.
If $\|\Phi_T^{-1}\vert {\L}\| = 0$ then it trivially follows that $\|\Phi_T^{-1}\vert {\L}\| \le \|\Phi_{\bar T}^{-1}\vert {\L}\|.$   Otherwise, 
since $\|\bar y\|\le 1$ and $\bar y \in \L$ we have
\[
\|\Phi_T^{-1}\vert {\L}\| = -\ip{\bar v}{ \bar y} \le \|\Pi_{\L}(\bar v)\|^*.
\]
Since $(\bar u,\bar v) \in T^* = \graph(\Phi_{\bar T})^*$ and $\|\bar u\|^* \le 1$,
 Proposition~\ref{prop.Hoffman.surj} yields
\[
\|\Phi_T^{-1}\vert {\L}\| \le \|\Pi_{\L}(\bar v)\|^* \le \|\Phi_{\bar T}^{-1}\vert {\L}\|.
\]
In either case $\|\Phi_T^{-1}\vert {\L}\| \le \|\Phi_{\bar T}^{-1}\vert {\L}\|$ where $\bar T \in \S(\Phi)$.  Since this holds for any fixed $T \in\mathcal T(K)$, it follows that
\[
\dmax_{T\in \mathcal T(K)} \vertiii{\Phi_T^{-1}\vert {\L}} \le  \dmax_{\bar T\in \S(\Phi)}  \|\Phi_{\bar T}^{-1}\vert {\L}\|. 
\]

It remains to show that $(\bar u,\bar v, \bar y)$ can be chosen so that $\Phi_{\bar T}$ is surjective, where $\bar T = \bar F^*$ and $\bar F$ is the minimal face of $K^*$ containing $(\bar u,\bar v)$.  To that end, pick a 
solution $(\bar u,\bar v,\bar y)$ to~\eqref{eq.norm.Hoffman} and consider the set
$$
V:=\{v \in \R^m: \ip{v}{\bar y} = \ip{\bar v}{\bar y}, \, (\bar u,v)\in T^*\}.$$
In other words, $V$ is the projection of the set of optimal solutions  to~\eqref{eq.norm.Hoffman} of the form $(\bar u, v, \bar y)$.  Since $T$ is polyhedral, so is $T^*$ and thus $V$ is a polyhedron.
Furthermore, $V$ must have at least one extreme point.  Otherwise there exist $\hat v\in V$ and a nonzero $\tilde v\in \R^m$ such that $\hat v + t\tilde v \in V$ for all $t \in \R$.   In particular, $(\bar u, \hat v + t\tilde v) \in T^*$ for all $t \in \R$ and thus both  $(0,\tilde v) \in T^*\subseteq K^*$ and $-(0,\tilde v)\in T^*\subseteq K^*$.
The latter in turn implies $\Image(\Phi) \subseteq \{y\in \R^m: \ip{\tilde v}{y} =0\}$ contradicting the assumption $\linspan(\Image(\Phi)) =\R^m$.  By replacing $\bar v$ if necessary, we can assume that
$\bar v$ is an extreme point of $V$. We claim that the minimal face $\bar F$ of $K^*$ containing $(\bar u,\bar v)$ satisfies
\[
(0,v') \in \bar F = \bar T^* \Rightarrow v' = 0
\]
thereby establishing the surjectivity of $\Phi_{\bar T}$ (cf., \eqref{eq.non.sing}).
To prove this claim, proceed by contradiction.  Assume $(0,v') \in \bar F$ for some nonzero $v'\in \R^m$.
The choice of $\bar F$ ensures that $(\bar u,\bar v)$ lies in the relative interior of $\bar F$ and thus for $t>0$ sufficiently small both $(\bar u,\bar v + tv')\in \bar F\subseteq T^*$ and
$(\bar u,\bar v - tv')\in \bar F\subseteq T^*$.  The optimality of $(\bar u,\bar v,\bar y)$  implies that both $\ip{\bar v+tv'}{\bar y } \ge \ip{\bar v}{ \bar y}$ and $\ip{\bar v -tv'}{\bar y} \ge \ip{\bar v}{ \bar y}$ and so
$\ip{v'}{ \bar y} = 0$.  Thus both $\bar v + tv' \in V$ and $\bar v -tv'\in V$ with $tv'\ne 0$ thereby contradicting the assumption that $\bar v$ is an extreme point of $V$.
\end{proof}

\section{Proofs of propositions in Section~\ref{sec.Hoffman}}\label{sec.proof.Hoffman}

\begin{proof}[Proof of Proposition~\ref{prop.Hoffman.gral.rest}]
Let $\Phi:\R^n \rightrightarrows \R^m$ be defined by $\Phi(x):= Ax+\R^n_+$ and $\mathcal L:=\{y\in \R^m: y_{L^c} = 0\}$. Observe that for this $\Phi$ we have
\[
\graph(\Phi) = \{(x,Ax+s) \in \R^n \times \R^m: s \ge 0\}.
\]
Hence $\T(\graph(\Phi)) = \{T_J: J \subseteq \{1,\dots,m\}\}$ where
\[
T_J = \{(x,Ax+s) \in \R^n \times \R^m: s_J \ge 0\}.
\]
Furthermore, for $T_J$ as above the mapping $\Phi_{T_J}:\R^n \rightrightarrows \R^m$ is defined by
\[
\Phi_{T_J}(x) = \{Ax + s: s_J \ge 0\}.
\]
Therefore, $\Phi_{T_J}$ is relatively surjective if and only if $J$ is $A$-surjective.  In other words, $T_J \in \S(\Phi) \Leftrightarrow J \in \sJ(A)$ and in that case \[
\|\Phi_{T_J}^{-1}\vert {\mathcal L}\|  = \dmax_{y \in \R^L \atop \|y\| = 1} \min_{x\in\R^n\atop A_J x\le y_J}\|x\| 
= H_J(A\vert L).
\]
To finish, apply Theorem~\ref{thm.main} to $\Phi$ and $\mathcal L$.
\end{proof}

\begin{proof}[Proof of Proposition~\ref{prop.Hoffman.A.surj.rest}]
Let $\mathcal L:=\{y\in \R^m: y_{L^c} = 0\}$.  If $J \in \sJ(A)$ then the polyhedral sublinear mapping $\Phi:\R^n \rightrightarrows \R^m$ defined via
\[
x\mapsto Ax + \{s\in \R^m: s_J \ge 0\}
\]
is surjective.  Thus Proposition~\ref{prop.Hoffman.surj} yields
\[
H_J(A\vert L) = \|\Phi^{-1}\vert {\mathcal L}\| = \frac{1}{\dmin_{(u,v) \in \graph(\Phi)^*\atop\|\Pi_{\mathcal L}(v)\|^*=1} \|u\|^*}. 
\]
To get~\eqref{eq.HA.J.L}, observe that $u\in \Phi^*(v)$ if and only if 
$u = A\transp v, v_J \ge 0,$ and $v_{J^c} = 0$, and when that is the case $\Pi_{\mathcal L}(v) = v_{J\cap L}$.
Finally observe that~\eqref{eq.HA.L} readily follows form~\eqref{eq.HA.J.L}.
\end{proof}

Proposition~\ref{prop.Hoffman.gral} and Proposition~\ref{prop.Hoffman.A.surj} follow as special cases of 
Proposition~\ref{prop.Hoffman.gral.rest} and Proposition~\ref{prop.Hoffman.A.surj.rest}
by taking $L = \{1,\dots,m\}$.  The proofs of the remaining propositions are similar to the proofs of Proposition~\ref{prop.Hoffman.gral.rest} and Proposition~\ref{prop.Hoffman.A.surj.rest}.  

\begin{proof}[Proof of Proposition~\ref{prop.Hoffman}]
Let $\Phi:\R^n \rightrightarrows \R^m\times \R^p$ be defined as 
\[
\Phi(x) := \matr{Ax\\Cx} +  \{0\}\times \R^p_+
\]
and $\L := \R^m \times \R^p$.  Then $\T(\graph(\Phi)) = \{T_J: J\subseteq\{1,\dots,p\}\}$ where
\[
T_J := \{(x,Ax,Cx+s)\in \R^n\times (\R^m\times \R^p): s_J \ge 0\}.
\]
Furthermore,
$T_J\in \S(\Phi) \Leftrightarrow J \in \sJ(A;C)$ and in that case
\[
\|\Phi_{T_J}^{-1}\vert {\L}\|=\dmax_{(y,w)\in A\R^m\times \R^p \atop \|(y,w)\|\le 1} \dmin_{x\in \R^n \atop Ax = y, C_Jx \le w_J} \|x\|.
\]
Observe that 
$
u \in \Phi_{T_J}^*(v,z) \Leftrightarrow (-u,v,z) \in T_J^*  \Leftrightarrow u = A\transp v + C_J\transp z_J, \;  z_J \ge 0,$  and $z_{J^c} = 0.$
Thus for $J\in\sJ( A;C)$
Proposition~\ref{prop.Hoffman.surj} yields
\begin{align*}
\|\Phi_{T_J}^{-1}\vert {\L}\| &= \dmax_{(v,z)\in \R^m\times\R^p_+ \atop z_{J^c} = 0, \|A\transp v +C\transp z\|^* \le 1} \|\Pi_{\Image(\Phi_{T_J})\cap\L}(v,z)\|^*  \\ &= \dmax_{(v,z)\in \R^m\times\R^p_+ \atop z_{J^c} = 0, \|A\transp v +C\transp z\|^* \le 1} \|\Pi_{A(\R^n)\times \R^p}(v,z)\|^* \\&= 
\dmax_{(v,z)\in (A\R^n)\times\R^p_+ \atop z_{J^c} = 0, \|A\transp v +C\transp z\|^* \le 1} \|(v,z)\|^*
\end{align*}
To finish, apply Theorem~\ref{thm.main}.
\end{proof}

\begin{proof}[Proof of Proposition~\ref{prop.Hoffman.std}]
The proof is identical to the proof of Proposition~\ref{prop.Hoffman} if we take $\L = \R^m \times \{0\}$ instead.
\end{proof}

\begin{proof}[Proof of Proposition~\ref{prop.equal.easy}]
The proof is identical to the proof of Proposition~\ref{prop.Hoffman} if we take $\L = \{0\} \times \R^p$ instead.
\end{proof}

\begin{proof}[Proof of Proposition~\ref{prop.facial.dist}]
The proof is similar to the proof of Proposition~\ref{prop.Hoffman}. 
Let $\Phi:\R^n \rightrightarrows \R^m \times \R \times \R^n$ be defined as
\[
\Phi(x) = \matr{\tilde A x\\ Cx} + \{0\} \times \R^n_+
\]
and $\L = \R^m\times \{0\} \times\{0\} \subseteq \R^m \times \R \times \R^n$.
Then $\T(\graph(\Phi)) = \{T_J: J\subseteq\{1,\dots,p\}\}$ where
\[
T_J := \{(x,\tilde Ax,-x+s)\in \R^n\times (\R^m\times \R\times\R^n): s_J \ge 0\}.
\]
Furthermore,
$T_J\in \S(\Phi) \Leftrightarrow J \in \sJ(\tilde A;C)$ and in that case 
\[
\Image(\Phi_{T_J})\cap\L = \left\{(y,0,0) : y = Ax, 0 = \1\transp x, \; 0=-x+s \; \text{for} \; s\in \R^n \; \text{with}\; s_J \ge 0\right\}.
\]
Thus
\[
\|\Phi_{T_J}^{-1}\vert {\L}\|=\dmax_{y\in AK_J \atop \|y\|\le 1} \dmin_{x\in K_J \atop Ax = y} \|x\|.
\]
Observe that 
$
u \in \Phi_{T_J}^*(v,t,z) \Leftrightarrow  u =  A\transp v + t\1 -z, \; z_J \ge 0,$ and $z_{J^c} = 0.$
Thus for $J\in\sJ(\tilde A;C)$  Proposition~\ref{prop.Hoffman.surj} yields
\begin{align*}
\|\Phi_{T_J}^{-1}\vert {\L}\| &= \dmax_{(v,t,z)\in \R^m\times \R\times \R^n_+ \atop z_{J^c} = 0,\|A\transp v +t\1 -z\|^* \le 1} \|\Pi_{\Image(\Phi_{T_J})\cap\L}(v,t,z)\|^*  \\ &= \dmax_{(v,t,z)\in \R^m\times \R\times \R^n_+ \atop z_{J^c} = 0, \|A\transp v +t\1 -z\|^* \le 1} \|\Pi_{(\tilde A(\R^n)\times \R^n)\cap (\R^m\times \{0\}\times \{0\})}(v,t,z)\|^*  \\
&= \dmax_{(v,t)\in\tilde A\R^n, z\in \R^n_+ \atop z_{J^c} = 0, \|A\transp v +t\1 -z\|^* \le 1} \|\Pi_{\tilde A(\R^n)\cap (\R^m\times \{0\})}(v,t)\|^*  \\
&= 
\dmax_{(v,t)\in\tilde A\R^n, z\in \R^n_+ \atop z_{J^c} = 0, \|A\transp v + t\1 -z\|^* \le 1} \|\Pi_{L_A}(v)\|^*.
\end{align*}
To finish, apply Theorem~\ref{thm.main}.

\end{proof}

\section*{Acknowledgements}

Javier Pe\~na's research has been funded by NSF grant CMMI-1534850.


\end{document}